\setlist[itemize]{noitemsep} % Make itemize lists more compact
 \newcommand{\rom}[1]{%
  \textup{\lowercase\expandafter{\romannumeral#1}}%
}
\newcommand{\Rom}[1]{%
  \textup{\uppercase\expandafter{\romannumeral#1}}%
}
\newtheorem{theorem}{Theorem}[section]
\newtheorem{lemma}{Lemma}[section]
\newtheorem{proposition}{Proposition}[section]
\newtheorem{remark}{Remark}[section]
\newcommand{\BX}{{\bf X}}
\newcommand{\bx}{{\bf x}}
\newcommand{\by}{{\bf y}}
\newcommand{\bs}{{\bf s}}
\newcommand{\bt}{{\bf t}}
\newcommand{\bn}{{\bf n}}
\newcommand{\bk}{{\bf k}}
\newcommand{\reals}{{\mathbb R}}
\newcommand{\bbr}{\reals}
\newcommand{\bbn}{{\mathbb N}}
\newcommand{\bbz}{\protect{\mathbb Z}}
\newcommand{\one}{{\bf 1}}
\newcommand{\0}{{\bf 0}}
\newcommand{\eid}{\stackrel{d}{=}}
\newcommand{\binfty}{\bm{\infty}}
\newcommand{\bbeta}{\bm{\beta}}
\newcommand{\SaS}{S$\alpha$S}
\begin{document}
 
\title[Extreme Value Theory for Random Fields]
{Extreme Value Theory for Long Range Dependent Stable Random Fields}

\author{Zaoli Chen}
\address{Department of Mathematics \\
Cornell University \\
Ithaca, NY 14853}
\email{zc288@cornell.edu}

\author{Gennady Samorodnitsky}
\address{School of Operations Research and Information Engineering\\
and Department of Statistical Science \\
Cornell University \\
Ithaca, NY 14853}
\email{gs18@cornell.edu}

%\date{}

%----------------------------------------------------------------------------------------
\numberwithin{equation}{section}
%----------------------------------------------------------------------------------------
 \thanks{This research was partially supported by the NSF grant
  DMS-1506783 and the ARO grant
   W911NF-18 -10318 at Cornell  University.}

\subjclass{Primary 60G60, 60G70, 60G52}
\keywords{Random field, extremal limit theorem, random sup measure,
  random closed set, long range dependence, stable law, heavy tails}

\begin{abstract}
We study  the extremes for a class of a symmetric stable random fields 
with long 
range dependence. We prove functional extremal theorems both in the
space of sup measures and in the space of cadlag functions of several
variables. The limits in both types of theorems are of a new kind, and
only in a certain range of parameters these limits have the Fr\'echet
distribution. 
\end{abstract}

\maketitle

\section{Introduction} \label{sec:intro}

Extreme value theorems describe the limiting behaviour of the largest
values in increasingly large collections of random variables. The
classical extremal theorems, beginning with \cite{fisher:tippett:1928}
and \cite{gnedenko:1943}, deal with the extremes of
i.i.d. (independent and identically distributed) random
variables. The modern extreme value theory techniques allow us to
study the extremes of dependent sequences; see
\cite{leadbetter:lindgren:rootzen:1983} and the expositions in
\cite{coles:2001} and \cite{dehaan:ferreira:2006}. The effect of
dependence on extreme values can be restricted to a loss in the
effective sample size, through the extremal index of the sequence. When
the dependence is sufficiently long, more significant changes in
extreme value may occur; see e.g. \cite{samorodnitsky:2004a},
\cite{owada:samorodnitsky:2015a}. The present paper aims to contribute
to our understanding of the effect of memory on extremes when the time
is of dimension larger than 1, i.e. for random fields. 

We consider a discrete time stationary random field $\BX = \bigl(
X_\bt, \, \bt\in\bbz^d\bigr)$. For $\bn=(n_1,\ldots, n_d)\in \bbn^d$ we
would like to study the extremes of the random field over growing
hypercubes of the type
$$
[\0,\bn]=\bigl\{ \0\leq \bk\leq \bn\bigr\}, \ \bn\to\binfty\,,
$$
where $\0$ is the vector with zero coordinates, the notation $\bs\leq
\bt$ for vectors $\bs=(s_1,\ldots, s_d)$ and $\bt=(t_1,\ldots, t_d)$
means that $s_i\leq t_i$ for all $i=1,\ldots,d$, and the notation $\bn\to\binfty$
means that all $d$ components of the vector $\bn$ tend to
infinity. Denote
$$
M_\bn = \max_{ \0\leq \bk\leq \bn}X_\bt\,.
$$
What limit theorems does the array $(M_\bn)$ satisfy? It was shown by
\cite{leadbetter:rootzen:1998}  that
under appropriate strong mixing conditions, only the classical three
types of limiting distributions (Gumbel, Fr\'echet and Weibull) may
appear (even when forcing $\bn\to\binfty$ along a monotone curve). In
the case when the marginal distributions of the field $\BX$ have
regularly varying tails, this allows only the Fr\'echet distribution
as a limit. 

In this paper we will discuss only random fields with regularly
varying tails, in which case the experience from the classical extreme
value theory tells us to look for limit theorems for the type
\begin{equation} \label{e:limit.thm.1d}
\frac{1}{b_\bn}M_\bn\Rightarrow Y \ \ \text{as $\bn\to\binfty$}
\end{equation}
for some nondegenerate random variable $Y$. The regular
variation of the marginal distributions means that 
\begin{equation} \label{e:marg.tail}
P( X(\0)>x)= x^{-\alpha}L(x), \ \alpha>0, \ \text{$L$ slowly varying,}
\end{equation} 
see e.g. \cite{resnick:1987}. 
Notice that the assumption is only on the right tail of the
distribution since, in most cases, one does not expect a limit theorem
for the partial maxima as in \eqref{e:limit.thm.1d} to be affected by
the left tail of $X(\0)$. 

If the random field $\BX$ consists of i.i.d. random variables
satisfying the regular variation condition \eqref{e:marg.tail}, then
the classical extreme value theory tells us that the convergence in
\eqref{e:limit.thm.1d} holds if we choose 
\begin{equation} \label{e:bn.standard}
b_\bn = \inf\bigl\{ x>0:\, P( X(\0)>x) \leq (n_1\cdots
n_d)^{-1}\bigr\}\,,
\end{equation} 
in which case the limiting random variable $Y$ has the standard
Fr\'echet distribution. We are interested in understanding how the
spatial dependence in the random field $\BX$ affects the scaling in 
and the distribution of the limit not only in \eqref{e:limit.thm.1d},
but in its functional versions, which can be stated in different
spaces, for example in the space $D(\mathbb{R}_+^d)$ of right
continuous, with limits along monotone paths, functions (see
\cite{straf:1972}), or in the space of random sup measures
$\mathcal{M}(\mathbb{R}_+^d)$; see
\cite{obrien:torfs:vervaat:1990}. We will describe the relevant spaces
below. 

If the time is one-dimensional, and the memory in the stationary
process is short, then the standard normalization
\eqref{e:bn.standard} is still the appropriate one, and the limits
both in \eqref{e:limit.thm.1d} and its functional versions change only
through a change in a multiplicative constant; see
\cite{samorodnitsky:2016} and references therein. However, when the
memory becomes sufficiently long, both the order of magnitude of the
normalization in the limit theorems changes, and the nature of the
limit changes as well; see \cite{samorodnitsky:2004a} and
\cite{owada:samorodnitsky:2015a}. Furthermore, the limit may even stop 
having the Fr\'echet distribution (or Fr\'echet marginal
distributions, in the functions limit theorems); see
\cite{samorodnitsky:wang:2017}. It is reasonable to expect that
similar phenomena happen for random fields, but because it is harder
to quantify how long the memory is when the time is not
one-dimensional, less is known in this case. 

In this paper we will concentrate on the case where the random field
$\BX$ 
is a symmetric $\alpha$-stable (\SaS) random field,
$0<\alpha<2$. Recall that this 
means that every finite linear combination of the values of the values
of the random field has a one-dimensional \SaS\  distribution, i.e. has
a characteristic function of the form
$\exp\{-\sigma^\alpha|\theta|^\alpha\}$, $\theta\in\bbr$, where
$\sigma\in [0,\infty)$ is a scale parameter that  depends on the linear
combination; see \cite{samorodnitsky:taqqu:1994}. The marginal
distributions of \SaS\ random fields satisfy the regular variation
assumption \eqref{e:marg.tail} with $0<\alpha<2$  that  coincides with
the index of stability. In this case a series of results on the
relation between the sizes of the extremes of stationary \SaS\ random
fiels and certain ergodic-theoretical properties of the L\'evy
measures of these fields is due to Parthanil Roy and his coworkers;
see  \cite{roy:samorodnitsky:2008},
\cite{chakrabarty:roy:2013}, \cite{sarkar:roy:2016}. These results are
made possible because of the connection between the structure of the
\SaS\ random fields and ergodic theory established by
\cite{rosinski:2000}. 

This paper contributes to understanding the extremal limit theorems
for \SaS\ random fields and their connection to the dynamics of the
L\'evy measures. In this sense our paper is related to the ideas of 
\cite{rosinski:2000}. However, we will restrict ourselves to certain
Markov  flows. This will allow us to avoid, to a large extent, the
language of ergodic theory, and state everything in purely
probabilistic terms. There is not doubt, however, that our results
could be extended to more general dynamical systems acting on the
L\'evy measures of \SaS\ random fields. The generality in which work
is sufficient to demonstrate the new phenomena that may arise in
extremal limit theorems for random fields with long range
dependence. We will exhibit new types of limits, some of which will
have non-Fr\'echet distributions, both in the space of random sup
measures and in the space $D(\mathbb{R}_+^d)$. 

This paper is organized as follows. In Section \ref{sec:random.field}
we introduce the class of stationary symmetric $\alpha$-stable random
fields we will study in this paper. In Section \ref{sec:supmeasures}
we provide some background on random closed sets and random sup
measures, and describe the limiting random sup
measure that appears as the weak limit the extremal theorem in Section
\ref{Convergence   of the random sup measures}. Finally, in Section
\ref{Convergence of the partial maxima processes} we prove versions of
our extremal limit theorems in the space $D(\mathbb{R}_+^d)$.

{\bf Notation}: \ For a function $g$ on an arbitrary set with values
in a linear space  we denote  the set of zeros of $g$ by
$\mathcal{Z}(g) $. Arithmetic operations involved vectors are
performed component-wise. Thus, if $\bx=(x_1,\ldots,x_d)$ and
$\by=(y_1,\ldots, y_d)$, then, say, $\bx\by=(x_1y_1,\ldots,
x_dy_d)$. This extends to sets: if $A=A_1\times \cdots\times A_d$, then
$\bx A= x_1A_1\times \cdots \times x_dA_d$. 

\section{A \SaS\ random field with long range dependence}
\label{sec:random.field}

We start with a construction of a family of stationary \SaS\ random
fields, $0<\alpha<2$, whose memory has a natural finite-dimensional
parameterization. It is an extension to random fields of models
considered before in the case of one-dimensional time; see
e.g. \cite{resnick:samorodnitsky:xue:2000}, 
\cite{samorodnitsky:2004a}, 
\cite{owada:samorodnitsky:2015,owada:samorodnitsky:2015a},
\cite{owada:2016} and \cite{lacaux:samorodnitsky:2016}. 

We start with $d$ $\sigma$-finite, infinite measures on $
\bigl(\mathbb{Z}^{\bbn_0} , \mathcal{B}(\mathbb{Z}^{\bbn_0}) \bigr)$
defined by 
\begin{equation}   \label{e:mu.j}
\mu_i : = \sum_{k \in \mathbb{Z} } \pi^{(i)}_k  P^{(i)}_k\,,
\end{equation}
where for $i=1,\ldots, d$, $P^{(i)}_k$ is the law of an irreducible
aperiodic null-recurrent Markov chain  $(Y^{(i)}_n)_{ n \geq 0 }$ on $\bbz$ 
starting at $Y^{(i)}_0=k\in\bbz$. Further, $(\pi^{(i)}_k)_{k \in
  \mathbb{Z} }$ is its unique (infinite) invariant measure satisfying 
$\pi^{(i)}_0= 1$. Given this invariant measure, we can extend the
probability measures $P^{(i)}_k$ from measures on
$\mathbb{Z}^{\bbn_0}$ to measures on  $\bbz^\bbz$ which, in turn,
allows us to extend the measure $\mu_i$ in \eqref{e:mu.j} to
$\bbz^\bbz$ as well. We will keep using the same notation as in
\eqref{e:mu.j}. 

We will work with the product space
$$
(E,\mathcal{E})  = \left( \mathbb{Z}^{\bbz}\times\cdot\times 
  \mathbb{Z}^{\bbz}, \ \mathcal{B}(\mathbb{Z}^{\bbz})
  \times\cdot\times \mathcal{B}(\mathbb{Z}^{\bbz}) \right)
$$
of $d$ copies of $\bigl(\mathbb{Z}^{\bbz} ,
\mathcal{B}(\mathbb{Z}^{\bbz}) \bigr)$, on which we put the
product, $\sigma$-finite, infinite, measure
$$
\mu = \mu_1\times \cdot\times\mu_d\,.
$$

The key assumption is a regular variation assumption on the return
times of the Markov chains $(Y^{(i)}_n)_{ n \geq 0 }, \, i=1,\ldots, d$. For
$\bx=(\ldots, x_{-1},x_0,x_1,x_2\ldots)\in \bbz^{\bbz}$ we define the first return
time to the origin by $\varphi(\bx)= \inf \{ n \geq 1 :\, x_n  = 0
\}$. We assume that for $i=1,\ldots, d$ we have 
\begin{equation} \label{e:return.regvar}
P_0^{(i)}(\varphi >n)\in RV_{-\beta_i}
\end{equation}
for some $0<\beta_i<1$. This implies that 
\begin{align} \label{e:bn.j}
& \mu_i \left(  \{\bx:\, x_k =
  0 \; \text{for some} \; k=0,1,\ldots,n \} \right) \\
\sim& \sum_{k=1}^n
P_0^{(i)}(\varphi >k)  \sim (1-\beta_i)^{-1}
nP_0^{(i)}(\varphi >n)  \in RV_{1-\beta_i} \,.\notag 
\end{align} 
See \cite{resnick:samorodnitsky:xue:2000}.

On $\mathbb{Z}^{\bbz}$ there is a natural left shift operator 
$$ 
T \bigl( (\ldots, x_{-1},x_0,x_1,x_2\ldots) \bigr) =    (\ldots,
x_{0},x_1,x_2,x_3\ldots)\,.
$$
It is naturally extended to a group action of $\bbz^d$ on $E$ as
follows. Writing an element $\bx \in E$ as $\bx=(\bx^{(1)},\ldots,
\bx^{(d)})$ with $\bx^{(i)}=(\ldots, x_{-1}^{(i)},x_0 ^{(i)},x_1
^{(i)},x_2 ^{(i)}\ldots) \bigr)\in \bbz^\bbz$ for $i=1,\ldots, d$, 
we set for $\bn=(n_1,\ldots, n_d)\in \bbz^d$, 
\begin{equation}  \label{eq; multi para action}
T^\bn \bx = ( T^{n_1} \bx^{(1)}, \ldots , T ^ {n_d} \bx ^ { (d) } )\in
E\,.
\end{equation}
Even though we are using the same notation $T$ for operators acting on
different spaces, the meaning will always be clear from the
context. Note that each individual left shift $T$ on
$\bigl(\mathbb{Z}^{\bbn_0} , \mathcal{B}(\mathbb{Z}^{\bbn_0})
,\mu_j\bigr)$ is measure preserving (because each $(\pi^{(j)}_i)_{i
  \in   \mathbb{Z} }$ is an invariant measure.) It is
also conservative and ergodic by Theorem 4.5.3 in
\cite{aaronson:1997}. Therefore, the group action $\mathcal{T} = \{
T^\bn: \bn \in \bbz^d \}$ is conservative, ergodic and measure
preserving on   $(E, \mathcal{E}, \mu)$. 

Equipped with a measure preserving group action on the space
$(E,\mathcal{E})$ we can now  define a stationary symmetric
$\alpha$-stable random field by 
\begin{equation} \label{eq; SaS d-para random fields}
X_\bn = \int_E f \circ T^\bn (\bx)\, M(d\bx),   \quad \bn \in\bbz^ d\,,
\end{equation}
where  $M$ is a $S \alpha S$ random measure on $(E,\mathcal{E})$ with
control measure $\mu$, and 
\begin{equation} \label{e:kernel}
  f (\bx) =  \one( \bx^{(i)}\in A, \, i=1,\ldots, d), \ \bx=(\bx^{(1)},\ldots,
\bx^{(d)})\,.
\end{equation} 
where $A=\{ \bx\in\bbz^\bbz:\, x_0=0\}$. Clearly, $f\in  L^\alpha (
\mu)$, which guarantees that the integral in \eqref{eq; SaS d-para
  random fields} is well defined. 
We refer the reader to  \cite{samorodnitsky:taqqu:1994} for general
information on stable processes and integrals with respect to stable
measures, and to \cite{rosinski:2000} on more details on  stationary
stable random fields and their representations.

The random field model defined by \eqref{eq; SaS d-para
  random fields} is attractive because the key parameters involve
in its definition have a clear intuitive meaning: the index of
stability $0<\alpha<2$ is responsible for the heaviness of the tails,
while $0<\beta_i<1$, $i=1,\ldots, d$ (defined in
\eqref{e:return.regvar}) are responsible for the ``length of the
memory''. The latter claim is not immediately obvious, but its
(informal) validity will become clearer in the sequel. 

The following array of positive numbers will play the crucial role in
the extremal limit theorems in this paper. Denote for $n=1,2,\ldots$
and $i=1,\ldots, d$, 
$$
b_{n}^{(i)}= \bigl( \mu_i \left( \{ \bx:\, x_k =
  0 \; \text{for some} \; k=0,1,\ldots,n \}\right)\bigr)^{1/\alpha}\,,
$$
and let 
\begin{equation} \label{e:b.n}
b_\bn = \prod_{i=1}^d b_{n_i}^{(i)}, \ \bn=(n_1,\ldots,
n_d)\in\bbn_0^d\,. 
\end{equation}
Then $b_\bn^\alpha=\mu(B_\bn)$, where 
$$
B_\bn=    \{  \bx= (\bx^{(1)},\ldots,
\bx^{(d)})  \in E :  x^{(i)}_{k_i} = 0 \ 
  \text{ for some } \ 0\leq k_i \leq  n_i, \ \text{each} \ i=1,\ldots,
  d\}\,. 
$$
Therefore, we can define, for each $\bn \in\bbn_0^d$, a probability measure
$\eta_\bn$  on $(E, \mathcal E)$ by 
\begin{equation} \label{e:eta.n}
\eta_\bn(\cdot) = b_\bn^{-\alpha}\mu   (  \cdot  \;   \cap  B_\bn)\,.
\end{equation}
This probability measure allows us to represent the restriction of the
stationary $S\alpha S$ random field 
$\mathbf{X}$ in (\ref{eq; SaS d-para random fields}) to the hypercube
$[\0,\bn]=\{\mathbf{0} \leq \bk \leq  \mathbf{n}\}$ as a series,
described below, and that we will find useful in the sequel. 
It is useful to note also that the measure $\eta_\bn$ is the product
measure of $d$ probability measures on $
\bigl(\mathbb{Z}^{\bbz} , \mathcal{B}(\mathbb{Z}^{\bbz}) \bigr)$:
$\eta_\bn = \eta_{n_1}^{(1)}\times \cdot\times \eta_{n_d}^{(d)}$ for
$\bn=(n_1,\ldots, n_d) \in\bbn_0^d$, where for $i=1,\ldots, d$ and
$n\geq 0$, 
\begin{equation} \label{e:eta.i}
\eta^{(i)}_n(\cdot) = (b_{n}^{(i)})^{-\alpha} \mu_i   \bigl(  \cdot  \;   \cap \{
\bx\in \mathbb{Z}^{\bbz}:\, x_k=0  \ \text{for some} \ 0\leq k\leq
n\}\bigr)\,. 
\end{equation} 

The restriction of the stationary $S\alpha S$ random field
$\mathbf{X}$ in (\ref{eq; SaS d-para random fields}) to the hypercube
 $[\0,\bn]$ admits, in law, the series representation 
\begin{equation} \label{eq; series rep}
X_\bk    =    b_\bn C_\alpha ^ { 1 / \alpha } \sum _{ j=1} ^ \infty \epsilon_j \Gamma_j ^ { - 1 / \alpha } 
\one_{A^d} \circ T^\bk(U_{j,\bn}), \   \mathbf{0} \leq \bk \leq \mathbf{n}\,,
\end{equation}
with $A^d = A\times \cdot \times A$ the direct product of d copies of A and A  is in \eqref{e:kernel}, where 
 the constant $C_\alpha$ is the tail constant of the $\alpha$-stable
 random variable:
$$
C_\alpha = \left( \int_0^ \infty x ^ { - \alpha } \sin x d x
\right)^{-1}  = \begin{cases}  \frac{1 - \alpha}{ \Gamma(2 - \alpha )
    \cos (\pi \alpha / 2) } \quad & \alpha \neq 1  \\  2/ \pi   &
  \alpha =1 \end{cases} \,.
$$
Furthermore, $\{ \epsilon_j \}$ is a iid sequence of Rademacher random
variables, $\{ \Gamma_j \}$ is the sequence of the arrival times of a
unit rate Poisson process on $(0 , \infty) $, and 
 $\{ U_{j,\bn}\}$  are iid $E$-valued random elements with
 common law $\eta_\bn$. The sequence  
    $\{ \epsilon_j \}$, $\{ \Gamma_j \}$ and $\{ U_{j,\bn}\}$ are
    independent. See \cite{samorodnitsky:taqqu:1994} for details.

\section{Stable regenerative sets and random sup
  measures}  \label{sec:supmeasures} 

In this section we describe the limiting object one obtains in an
extremal limit theorem from the random field $\BX$ of the previous
section. We start with a bit of technical background information on
random closed sets and random sup measures. The reader should consult
\cite{molchanov:2017} for more details. 

Let $\mathbb{E}$ be a locally compact and second countable  Hausdorff
topological space (it will be  $\bbr^d$ or $[0,1]^d$ in our case). We
denote by  $\mathcal{G},\mathcal{F}, \mathcal{K}$ the families of
open, closed, compact sets of $\mathbb{E}$, respectively. 
The Fell topology  on the space $\mathcal{F} $ of closed sets has a
subbasis consisting of the sets 
\begin{align*}
& \mathcal{F}_G = \{ F \in \mathcal{F} : F \cap G \neq \emptyset \}, \
  G \in \mathcal{G}  \\
&  \mathcal{F} ^ K  = \{ F \in \mathcal{F} : F \cap K = \emptyset \},
  \ K \in \mathcal{K}\,.
\end{align*}
The Fell topology is metrizable and compact. 

A random closed set is a measurable mapping from a probability space
to $\mathcal{F}$ equipped with the Borel $\sigma$-field
$\mathcal{B}(\mathcal{F})$ generated by the Fell topology. A specific
random closed set in $\bbr$, the so-called stable regenerative set, is
the key for describing the main results of this paper. 
 
For  $0< \beta < 1$ let $(L_\beta(t),\, t\geq 0)$ be 
the standard $\beta$-stable subordinator. That is, it is 
an increasing L\'{e}vy process with Laplace transform $\mathbb{E} e ^
{ -\theta L_\beta (t) }  = e ^ { - t \theta ^ {  \beta } } $,
$\theta\geq 0$. 
The $\beta$-stable regenerative set is defined to be 
the closure of the range of the 
$\beta$-subordinator, viewed as a random closed set of $\mathbb{R}$: 
\begin{equation}   \label{eq; 1-b stable regenerative sets}
R_{\beta} \stackrel{d}{:=} \overline{ \{ L_{\beta}(t)\,,
  t \geq 0  \} }\,.
\end{equation}
See e.g. \cite{fitzsimmons:taksar:1988}. Products of shifted stable
regenerative sets produce random closed subsets of $\bbr^d$ as
follows. 

For $0<\beta_i<1$, $i=1,\ldots, d$, let 
$R^{(i)} _{\beta_i}, \, i=1,\ldots,d$
be  independent  $\beta_i$-stable regenerative sets.  Let $v^{(i)}>0,
\, i=1,\ldots, d$, and denote $\tilde{R}^{(i)}_{\beta_i}=
v^{(i)}+{R}^{(i)}_{\beta_i}$. Then 
\begin{align}
\tilde{R}_{\bbeta}: = \prod_{i=1}^ d
  \tilde{R}^{(i)}_{\beta_i}        \label{eq; prod shifted sub} 
\end{align}
is a random closed subset of $\mathbb{R}^d$. Such random closed sets
have interesting intersection properties.   The following proposition follows
from Lemma 3.1 of \cite{samorodnitsky:wang:2017}.  
\begin{proposition}  \label{prop;intersection properties of prod shifted rsc}
Let $\{\tilde{R}_{\bbeta,j} \}_{ j \geq 1 } $ be  independent random
closed sets in $\mathbb{R}^d$ as defined by \eqref{eq; prod shifted
  sub}. Suppose that the corresponding shift vectors $(v^{(i)}_j, \,
i=1,\ldots,d)_{j\geq 1}$ satisfy $v^{(i)}_{j_1}\not=v^{(i)}_{j_2}$ if
$j_1\not=j_2$ for each $i=1,\ldots, d$.  Then for any 
$m=1,2,\ldots$, 
$$
P( \cap_{j=1}^m \tilde{R}_{\bbeta, j } \neq \emptyset )= 0 \ \text{or}
\ 1\,.
$$
 The
probability is equal to 1 if and only if $ m <   \min_{i=1,\ldots,
  d}(1- \beta_i)^{-1} $.   
\end{proposition}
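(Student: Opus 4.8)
The plan is to reduce the $d$-dimensional statement to a calculation about a single stable regenerative set, using independence across coordinates. Write $\tilde R_{\bbeta,j}=\prod_{i=1}^d \tilde R^{(i)}_{\beta_i,j}$ with $\tilde R^{(i)}_{\beta_i,j}=v^{(i)}_j+R^{(i)}_{\beta_i,j}$, where the families $(R^{(i)}_{\beta_i,j})_{j\ge1}$ are independent across $i$. Since a product of sets is nonempty iff each factor is nonempty, we have the coordinatewise factorization
\begin{equation*}
\Bigl\{\bigcap_{j=1}^m \tilde R_{\bbeta,j}\neq\emptyset\Bigr\}
=\bigcap_{i=1}^d\Bigl\{\bigcap_{j=1}^m \tilde R^{(i)}_{\beta_i,j}\neq\emptyset\Bigr\},
\end{equation*}
and the $d$ events on the right are independent. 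So it suffices to show that for a single $\beta\in(0,1)$ and shifts $v_1,\dots,v_m$ that are pairwise distinct, $P\bigl(\bigcap_{j=1}^m (v_j+R_{\beta,j})\neq\emptyset\bigr)$ is $0$ or $1$, and equals $1$ precisely when $m<(1-\beta)^{-1}$, i.e. when $m(1-\beta)<1$. Granting the one-dimensional claim, the full statement follows: the $d$-fold intersection is nonempty with probability $1$ iff every coordinate intersection is, which happens iff $m(1-\beta_i)<1$ for all $i$, i.e. iff $m<\min_{i}(1-\beta_i)^{-1}$; otherwise at least one coordinate probability is $0$, making the product $0$.

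For the one-dimensional fact I would invoke Lemma 3.1 of \cite{samorodnitsky:wang:2017}, as the proposition already advertises. The zero-one dichotomy there is a consequence of the fact that intersections of independent regenerative sets are again (after a time change) regenerative, or alternatively of a Hewitt--Savage / scaling argument exploiting the self-similarity of $R_\beta$; the criterion $m(1-\beta)<1$ comes from the classical computation of the ``dimension'' of an intersection of $m$ independent copies of $R_\beta$ (each has Hausdorff/index dimension $\beta$, codimension $1-\beta$, and $m$ independent copies generically intersect iff the codimensions sum to less than $1$). In our setting the shifts $v_j$ only translate the sets; since $R_\beta$ is unbounded and $0\in R_\beta$, translation does not change whether a.s. intersection occurs, though one should note the shifts being pairwise distinct is exactly what rules out the trivial situation where two of the sets coincide (in which case the intersection would be nonempty for free and the count would be off). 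The main point is simply to record that \cite{samorodnitsky:wang:2017} supplies both the dichotomy and the exact threshold for $d=1$.

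The only genuine work, then, is the passage from $d=1$ to general $d$, and the one possible subtlety is the independence of the $d$ coordinate events: this is immediate here because the construction \eqref{eq; prod shifted sub} builds $\tilde R_{\bbeta,j}$ from independent coordinates $R^{(i)}_{\beta_i,j}$, and for fixed $i$ the sequence $(\tilde R^{(i)}_{\beta_i,j})_{j\ge1}$ uses only the $i$-th family together with the $i$-th shift coordinates $(v^{(i)}_j)_{j\ge1}$, which are assumed pairwise distinct. Hence the product structure on the left-hand side is literally a product of independent events, and the dichotomy and threshold transfer coordinatewise. I do not anticipate any obstacle beyond carefully stating this reduction; the hard analytic content is entirely quarantined inside the cited Lemma 3.1.
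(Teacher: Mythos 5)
Your proof is correct and takes essentially the same route as the paper, which simply asserts that the proposition ``follows from Lemma 3.1 of \cite{samorodnitsky:wang:2017}'': the coordinatewise factorization $\bigcap_{j=1}^m\tilde R_{\bbeta,j}=\prod_{i=1}^d\bigl(\bigcap_{j=1}^m\tilde R^{(i)}_{\beta_i,j}\bigr)$ into independent one-dimensional events, each governed by that lemma, is exactly the intended argument. The explicit reduction and independence check you supply is precisely the content the paper leaves implicit.
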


The next object to define is a sup measure. For simplicity we take 
$\mathbb{E}$ be the space $[0,1]^d$ or $\mathbb{R}^d$. The details of
the presentation below can be found in \cite{obrien:torfs:vervaat:1990}. 
A  map $m: \mathcal{G} \rightarrow [0, \infty]$ a called sup measure if
$m ( \emptyset) = 0$ and for an arbitrary collection of open sets $\{
G_\gamma  \}$ we have 
 $m ( \cup_{\gamma} G_\gamma ) = \sup_\gamma m(G_\gamma)$.  
 
The sup derivative $d^\vee m$ of a sup measure $m$ is defined by 
\begin{equation} \label{e:sup.der}
d ^ \vee m (t) := \inf_{t \in G } m (G), \quad G \in \mathcal{G}\,. 
\end{equation}
 It is automatically an upper semi-continuous function. 
 Conversely,  for any function $f: \mathbb{E}
 \rightarrow   [0,\infty]$, its  sup integral $i ^ \vee f $ is defined as
 \begin{equation} \label{e:sup.int}
 i^ \vee f (G) := \sup_ { t \in G } f (t) , \quad G \in
 \mathcal{G}\,. 
\end{equation}
If $f$ is upper semi-continuous, then $f = d ^ \vee i ^ \vee f
$. Furthermore, $m=i^\vee d^\vee m$, and one can use \eqref{e:sup.int}
to extend the domain of a sup measures to sets that are not
necessarily open, by setting 
$$
m(B) := \sup_{t \in B} d ^ \vee m(t), \quad B \subset   \mathbb{E} \,.
$$

On the space SM of all sup measures we introduce a topology, the
so-called sup vague topology, by saying that a sequence $\{m_n\}$ of
sup measures converges to a sup measure $m$ if 
$$ \limsup_{ n \to \infty} m_n (K)  \leq m (K) \ \text{for all  $K \in
  \mathcal{K}$ \ and} \ \ 
 \liminf_{ n \to \infty } m_n (G)  \geq m(G) \ \text{for all  $G \in
   \mathcal{G}$.}
$$
The space of  sup measures with sup vague topology is compact
and metrizable; see    Theorem 2.4. in   \cite{norberg:1990}, and we
will often use the notation $\mathcal{M}(\mathbb{E})$ for the space of
sup measures on $\mathbb{E}$. 

A random sup measure  is a measurable map from a probability space
into  SM equipped with the Borel $\sigma$-field  induced by the sup
vague topology. For a random  sup measure $\eta$, a continuity set is
an open set $G$ such that $\eta(G)=\eta({\bar G})$ (the closure of
$G$) a.s., and a useful criterion for weak convergence in the sup
vague topology of random sup-measures is as follows. 
Let $\{\eta_n\}_{n \geq 1 }$ be a sequence of random sup measures, and
$\eta$ a random sup measure. Then $\eta_n \Rightarrow \eta$ if and only
if  
\begin{equation} \label{e:sup.m.weak}
(\eta_n(B_1), \ldots, \eta_n(B_m) ) \Rightarrow (\eta(B_1), \ldots, \eta(B_m) )
\end{equation}
for arbitrary  disjoint open rectangles $B_1, \ldots, B_m$ in
$\mathbb{E}$  that are continuity sets for $\eta$. 
 
We are now ready to construct the random sup measure that will appear
as the limit in the extremal limit theorem in the space SM of the next
section. We will define this measure through its sup derivative, which
is a random upper semi-continuous function. Let $0<\beta_i<1$,
$i=1,\ldots, d$. We start with $d$ independent families of iid
$\beta_i$-stable regenerative sets  $\{R^{(i)} _{\beta_i, j} \}_{j
  \geq 1 }$, $i=1,\ldots, d$. Furthermore,  let $(U_{\alpha,j} ,
V_{\bbeta,j })_{j \geq 1}$ be a measurable enumeration of the points
of a Poisson point process on $\mathbb{R}\times \mathbb{R}^ d$,
independent of the stable regenerative sets,  with the mean measure 
$$
\alpha u ^ { - 1 - \alpha } d u   \prod_{i=1}^ d ( 1 - \beta_i)
  v_i ^ { -\beta_i}d v _i, \ \ u,v_1,\ldots, v_d>0\,.
$$
Then the  triples $(U_{\alpha,j} , V_{\bbeta,j }, R_{\bbeta,j} )_{j
  \geq 1}$ form a Poisson
point process on $\mathbb{R} \times \mathbb{R}^ d  \times
\mathcal{F}(\mathbb{R}^d)$ with the mean measure 
\begin{equation}  \label{eq; eta a b unrestricted mean measure}
\alpha u ^ { - 1 - \alpha } d u  \left( \prod_{i=1}^ d ( 1 - \beta_i)
  v_i ^ { -\beta_i}d v _i  \right) d \tilde P_{\bbeta}, \ \
u,v_1,\ldots, v_d>0\,.  
\end{equation}
Here $\tilde P_{\bbeta}$ is a probability measure on
$\mathcal{F}(\mathbb{R})^d$ defined by 
$$
\tilde P_{\bbeta}= \left( P_{\beta_1} \times \cdots \times P_{\beta_d}  \right)  \circ H ^ {-1}  \,,
$$
with $P_{\beta}$ being the law of the $\beta$-stable regenerative set, 
in  (\ref{eq; 1-b stable regenerative sets}), and $H:\,
\left(\mathcal{F}(\mathbb{R})\right)^d \to
\mathcal{F}(\mathbb{R}^d)$ is defined by 
$$
H(F_1,\ldots, F_d)=F_1\times \cdots\times F_d\,.
$$
Let 
\begin{equation}  \label{eq; eta a b sup measure unrestricted}
\eta_{\alpha, \bbeta} (\bt) = \sum_{j=1}^ \infty U_{\alpha,j} \one_{
  \{ \bt \in V_{\bbeta,j }+ R_{\bbeta,j} \} }\, \ \bt\in\bbr^d\,.
\end{equation}

Several observations are in order. First of all, by Proposition
\ref{prop;intersection properties of prod shifted rsc}, on event of
probability 1, for each $\bt$ the series in \eqref{eq; eta a b sup
  measure unrestricted} has less than 
$$
\ell(\bbeta):= \min_{i=1,\ldots,  d}(1- \beta_i)^{-1}
$$
non-zero terms, so there are no convergence issues. On the same event 
the function defined by \eqref{eq; eta a b sup
  measure unrestricted} is upper semi-continuous. Indeed, for any
finite $\ell$ the function 
$$
\sum_{j=1}^ \ell U_{\alpha,j} \one_{
  \{ \bt \in V_{\bbeta,j }+ R_{\bbeta,j} \} }\, \ \bt\in\bbr^d
$$
is upper semi-continuous since each terms in this finite sum is 
 upper semi-continuous due to the fact that each shifted product of
 stable regenerative sets is a closed set. Moreover, it is easy to
 check that, on each compact set, the uniform distance between this
 function and that defined in \eqref{eq; eta a b sup
  measure unrestricted}, goes to zero as $ \ell \to\infty$; see p. 10 in
\cite{samorodnitsky:wang:2017}. 

We now define a random sup measure as the sup integral of the random 
upper semi-continuous function in \eqref{eq; eta a b sup
  measure unrestricted}, and we will use the same notation,
$\eta_{\alpha, \bbeta}$, for this sup measure. That is,
\begin{equation}   \label{eq; eta a b sup measure}
\eta_{\alpha, \bbeta} (B) = \sup_{ t \in B } \sum_{j=1}^ \infty U_{\alpha,j} \one_{
  \{ \bt \in V_{\bbeta,j }+ R_{\bbeta,j} \} },\, \  B\in 
\mathcal{B}(\bbr^d)\,.
\end{equation}

\begin{remark} \label{rk:prop.eta}
{\rm 
The random sup measure $\eta_{\alpha,\bbeta}$ defined by \eqref{eq;
  eta a b sup measure} 
 is stationary, in the sense that for every
$\bx\geq\0$, $\eta_{\alpha,\bbeta}(\cdot+\bx)\eid
\eta_{\alpha,\bbeta}$. This follows from the shift invariance of the
law the random 
upper semi-continuous function in \eqref{eq; eta a b sup
  measure unrestricted} as in Proposition
3.2 in \cite{samorodnitsky:wang:2017} dealing with the 
  case $d=1$. The argument in that proposition also shows that 
the random sup measure $\eta_{\alpha,\bbeta}$ is  
self-similar, in the sense that for any $c_1>0, \ldots, c_d>0$, 
$$
\eta_{\alpha,\bbeta}\circ p_{c_1,\ldots, c_d}\eid \prod_{i=1}^d
c_i^{(1-\beta_i)/ \alpha} \eta_{\alpha,\bbeta}\,,
$$
where $ p_{c_1,\ldots, c_d}:\, \bbr^d\to \bbr^d$ is the
  multiplication functional $p_{c_1,\ldots, c_d}(t_1,\ldots, t_d) =
  (c_1t_1,\ldots, c_dt_d)$.   

Importantly,, $\eta_{\alpha,\bbeta}$ is a Fr\'echet
  random sup measure if and only if the sets $(V_{\bbeta,j }+
  R_{\bbeta,j}),\, j=1,2,\ldots$ are a.s. disjoint. According to
  Proposition \ref{prop;intersection properties of prod shifted rsc},
  a necessary and sufficient condition for this is $\beta_i\leq 1/2$
  for some $i=1,\ldots, d$. }
\end{remark}

The restriction of the random sup measure  $\eta_{\alpha,\bbeta}$ in
\eqref{eq; eta a b sup measure}   to the hypercube
$[\0,\one]$ has a somewhat more convenient representation. 
Let $\{R^{(i)} _{\beta_i, j} \}_{j  \geq 1 }$, $i=1,\ldots, d$ be as
stable regenerative sets as above, and let  $\{V^{(i)}_j \}_{j \geq 1}
$ be $d$ independent 
families of iid random variables on $[0,1]$ with distributions 
given by 
\begin{equation}  \label{eq; shift }
P( V^{(i)}_1 \leq x ) := x ^ {1-\beta_i}, \quad  x \in [0,1]\,.
\end{equation}
Let now  $\{ \Gamma_j \}$ be the sequence of the arrival times of a
unit rate Poisson process on $(0 , \infty) $. 
Assume that the families $\{V^{(1)}_j \}_{j \geq 1} ,\ldots, \{V^{(d)}_j \}_{j \geq 1}
$, $\{R^{(1)} _{\beta_1, j} \}_{j \geq 1 }, \ldots, \{R^{(d)}
_{\beta_d, j} \}_{j \geq 1 } $ and the Poisson process are
independent.  Denoting
$$
\tilde{R}^{(i)}_{\beta_i, j } = V^{(i)}_j + R^{(i)}_{\beta_i, j }, \
\ 
 1 \leq i \leq d,  \ j \geq 1  
$$
and
$$
  \tilde{R}_{\bbeta, j } = \prod_{i=1}^ d
  \tilde{R}^{(i)}_{\beta_i, j } \subset \mathbb{R}^d\,,        
$$
an alternative representation for the  random 
upper semi-continuous function in \eqref{eq; eta a b sup
  measure unrestricted} restricted to $[\0,\one]$ is   
\begin{equation} \label{eq; eta a b sup derivative}
\eta_{\alpha, \bbeta}(\bt)=  \sum_{j =1 }^ \infty  \Gamma_j ^ {-1 /
  \alpha } \one_{ \{ \bt \in \tilde{R}_{\bbeta,j}  \} }, \quad 
\bt \in [0,1]^d\,,
\end{equation}
with the corresponding change in \eqref{eq; eta a b sup measure}.  
 
\section{Convergence of the random sup measures}  \label{Convergence
  of the random sup measures}

In this section we establish the first functional extremal theorem for
the  stationary random field $\mathbf{X}$ in (\ref{eq; SaS d-para
  random fields}). The  random field   naturally induces a family of
random sup-measures  
$\{\eta_\bn \}_{\bn \in \bbn^d}$  by 
\begin{equation}  \label{e:M.n}
\eta_\bn(B) : = \max_{\bk/\bn \in B } X_\bk  , \quad B \in
\mathcal{B}( [0, \infty)^d ) \,.
\end{equation}
In the  following theorem we prove an extremal theorem in the space of
the random sup measures.
\begin{theorem}  \label{thm; r s-m conv}
For all $0< \alpha < 2 $ and $0 < \beta_i < 1 $, $i=1,\ldots,d$, 
\begin{equation} \label{e:Mn.conv}
\frac{1}{b_\bn} \eta_\bn \Rightarrow \left( \frac{ C_\alpha }{2}
\right )^{1/ \alpha }  \eta_{\alpha , \bbeta}, \quad \bn \to \binfty\,,
\end{equation}
where $\eta_{\alpha , \bbeta}$ is the random sup-measure defined in
\eqref{eq; eta a b sup measure unrestricted}. 
The weak convergence holds in the space of sup measures
$\mathcal{M}(\mathbb{R}^d)$ equipped with the sup vague topology. 
\end{theorem}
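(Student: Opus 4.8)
We first reduce the statement. By the weak convergence criterion \eqref{e:sup.m.weak} it suffices to prove the joint convergence
$$\bigl(b_\bn^{-1}\eta_\bn(B_1),\dots,b_\bn^{-1}\eta_\bn(B_m)\bigr)\Rightarrow\bigl((C_\alpha/2)^{1/\alpha}\eta_{\alpha,\bbeta}(B_1),\dots,(C_\alpha/2)^{1/\alpha}\eta_{\alpha,\bbeta}(B_m)\bigr)$$
for arbitrary disjoint bounded open rectangles $B_1,\dots,B_m$ that are continuity sets for $\eta_{\alpha,\bbeta}$. Using the self-similarity of $\eta_{\alpha,\bbeta}$ from Remark \ref{rk:prop.eta} together with $b^{(i)}_n\in RV_{(1-\beta_i)/\alpha}$, one may rescale each coordinate and assume all the $B_r$ lie inside $[\0,\one]$; for this range the restriction of $\BX$ to $[\0,\bn]$ is given in law by \eqref{eq; series rep} and the target object by \eqref{eq; eta a b sup derivative}, so it is enough to understand, for each $r$, the quantity $S_\bn^{(r)}:=\max_{\bk/\bn\in B_r}\sum_{j\ge1}\epsilon_j\Gamma_j^{-1/\alpha}\one_{A^d}\circ T^\bk(U_{j,\bn})$.

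The core of the argument is a point process limit for the rescaled zero sets. Writing $U_{j,\bn}=(U_{j,\bn}^{(1)},\dots,U_{j,\bn}^{(d)})$, the set $\{\bk\in[\0,\bn]:\one_{A^d}\circ T^\bk(U_{j,\bn})=1\}$ equals the product $\prod_{i=1}^d Z_{j,\bn}^{(i)}$ with $Z_{j,\bn}^{(i)}=\{0\le\ell\le n_i:(U_{j,\bn}^{(i)})_\ell=0\}$, the set of visits to $0$ in $[0,n_i]$ of a chain whose initial state is chosen according to the $\pi^{(i)}$-weighted, suitably restricted, measure. The key lemma (the $d$-dimensional, mildly extended, version of facts used in \cite{resnick:samorodnitsky:xue:2000} and \cite{samorodnitsky:wang:2017}) is that, under $\eta_n^{(i)}$, one has $n^{-1}Z_n^{(i)}\Rightarrow V^{(i)}+R^{(i)}_{\beta_i}$ in the Fell topology on $\mathcal F([0,1])$, where $R^{(i)}_{\beta_i}$ is the $\beta_i$-stable regenerative set and $V^{(i)}$ is independent of it with $P(V^{(i)}\le x)=x^{1-\beta_i}$; here $V^{(i)}$ is the scaling limit of the first visit to $0$, its law following from the regular variation \eqref{e:return.regvar}--\eqref{e:bn.j} (which forces the rescaled first-visit time to converge to the law in \eqref{eq; shift}), while the behaviour after the first visit produces $R^{(i)}_{\beta_i}$ by the renewal theorem for return times in $RV_{-\beta_i}$. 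Taking products over $i$ and using that the $U_{j,\bn}$ are i.i.d.\ and independent of $(\epsilon_j,\Gamma_j)_j$, the marked point processes converge,
$$\sum_{j\ge1}\delta_{(\epsilon_j\Gamma_j^{-1/\alpha},\,\bn^{-1}\prod_{i=1}^d Z_{j,\bn}^{(i)})}\ \Rightarrow\ \sum_{j\ge1}\delta_{(\epsilon_j\Gamma_j^{-1/\alpha},\,\tilde R_{\bbeta,j})}$$
on $\bigl([-\infty,\infty]\setminus\{0\}\bigr)\times\mathcal F([0,1]^d)$ (a compact metric mark space), with $\tilde R_{\bbeta,j}=\prod_i(V_j^{(i)}+R^{(i)}_{\beta_i,j})$; since only the finitely many $j$ above any fixed level of $|\epsilon_j\Gamma_j^{-1/\alpha}|$ matter, this follows from the mark convergence and the continuous mapping theorem.

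It remains to push the maximum through, and here I truncate at level $\ell$. For fixed $\ell$, $\max_{\bk/\bn\in B_r}\sum_{j\le\ell}\epsilon_j\Gamma_j^{-1/\alpha}\one_{A^d}\circ T^\bk(U_{j,\bn})$ is a fixed finite-sum functional of the point process above, a.s.\ continuous at the limiting configuration, hence converges to $\sup_{\bt\in B_r}\sum_{j\le\ell}\epsilon_j\Gamma_j^{-1/\alpha}\one_{\{\bt\in\tilde R_{\bbeta,j}\}}$. As $\ell\to\infty$ the latter tends a.s.\ to $\sup_{\bt\in B_r}\sum_{j:\epsilon_j=1}\Gamma_j^{-1/\alpha}\one_{\{\bt\in\tilde R_{\bbeta,j}\}}$: the upper bound is immediate after dropping the negative terms, and for the lower bound one uses Proposition \ref{prop;intersection properties of prod shifted rsc} --- at each $\bt$ fewer than $\ell(\bbeta)$ of the sets $\tilde R_{\bbeta,j}$ meet --- so that a near-maximizer of the positive-term supremum can be moved, via the $0$--$1$ law of that proposition and a Baire-category argument for products of stable regenerative sets, into the relevant positive sets while avoiding the finitely many negative sets with largest weights, losing only a total mass $\le\ell(\bbeta)\cdot(\text{small})$ from the negative terms with small weights. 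Since the indices with $\epsilon_j=1$, re-indexed, have $\Gamma$'s forming a rate-$1/2$ Poisson process, $\{\Gamma_j^{-1/\alpha}\}_{j:\epsilon_j=1}\eid 2^{-1/\alpha}\{\Gamma_j^{-1/\alpha}\}_{j\ge1}$, so this a.s.\ limit is $\eid 2^{-1/\alpha}\eta_{\alpha,\bbeta}(B_r)$; this is the origin of the factor $(C_\alpha/2)^{1/\alpha}$, the $C_\alpha^{1/\alpha}$ coming from \eqref{eq; series rep}. Finally one shows the tail $\max_{\bk/\bn\in B_r}\bigl|\sum_{j>\ell}\epsilon_j\Gamma_j^{-1/\alpha}\one_{A^d}\circ T^\bk(U_{j,\bn})\bigr|$ is negligible uniformly in $\bn$ as $\ell\to\infty$, using the symmetry of the $\epsilon_j$ and a maximal inequality for \SaS\ processes; a standard ``convergence together'' argument then upgrades the truncated convergence to $S_\bn^{(r)}\Rightarrow 2^{-1/\alpha}\eta_{\alpha,\bbeta}(B_r)$, jointly in $r$, and multiplying by $C_\alpha^{1/\alpha}$ finishes the proof.

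The main obstacle is this last step. Identifying the $\ell\to\infty$ limit of the truncated suprema forces the precise intersection geometry of products of shifted stable regenerative sets (Proposition \ref{prop;intersection properties of prod shifted rsc}) --- which is exactly where overlapping sets are permitted to contribute a \emph{sum} of Poisson weights, producing the non-Fr\'echet behaviour --- and the uniform-in-$\bn$ control of the series tail genuinely requires the cancellation from the Rademacher signs, being delicate when $\alpha\ge1$, where the corresponding sum of positive terms diverges. By comparison, the zero-set scaling limit of the second step, though technical, is essentially available from the one-dimensional theory.
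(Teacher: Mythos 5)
Your overall architecture (reduction to disjoint open rectangles via \eqref{e:sup.m.weak}, the series representation \eqref{eq; series rep}, truncation at level $\ell$, thinning of the Rademacher signs to produce the factor $2^{-1/\alpha}$, tail negligibility, and a convergence-together argument) matches the paper's. But there is a genuine gap at the central step. Your ``key lemma'' records only the \emph{marginal} scaling limits of the individual rescaled zero sets, $n^{-1}Z_n^{(i)}\Rightarrow V^{(i)}+R^{(i)}_{\beta_i}$, and you then assert that the truncated maximum is an a.s.\ continuous functional of the resulting marked point process. It is not: the functional $\max_{\bk/\bn\in B_r}\sum_{j\le\ell}\epsilon_j\Gamma_j^{-1/\alpha}\one_{A^d}\circ T^\bk(U_{j,\bn})$ is determined by which of the \emph{intersections} $\hat{I}_{S,\bn}=\cap_{j\in S}\mathcal{Z}(U_{j,\bn})$ meet $B_r$, and intersection of closed sets is only upper semicontinuous, not continuous, in the Fell topology. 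Marginal Fell convergence of independent closed sets does not imply convergence of their intersections, even when the limiting intersections are nondegenerate. This is precisely why the paper proves Proposition \ref{lem; joint convergence lemma} --- the \emph{joint} convergence of all $2^\ell$ rescaled intersections $(\bn^{-1}\hat{I}_{S,\bn})_{S\subset\{1,\dots,\ell\}}$ to $(I_S)_S$ --- which rests on Theorem 5.4 of \cite{samorodnitsky:wang:2017}, a statement about intersections of independent Markov-chain zero sets that is strictly stronger than the marginal limit you cite. Moreover, even to express the pre-limit maximum in terms of the intersection indicators one needs the ``exclusive visit'' property (the paper's Lemma \ref{lem; asy exclusive return time}): with probability tending to one, whenever $\bn^{-1}\hat{I}_{S,\bn}$ meets $B_r$ it contains a point of $B_r$ at which \emph{only} the chains indexed by $S$ are at zero. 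Your proposal contains neither ingredient.

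A second, smaller issue is the handling of the signs. You discard the negative terms only in the $\ell\to\infty$ limit, via a ``Baire-category argument'' for perturbing a near-maximizer off the negative sets; this is not a proof, and in any case the identification must be made at the pre-limit level for each fixed $\ell$, or the continuous-mapping step has nothing to converge to. The paper does it exactly and combinatorially: on the complement of $H_\bn(B_r)$ one has $\max_{\bk/\bn\in B_r}\sum_{j\le\ell}\epsilon_j\Gamma_j^{-1/\alpha}\one_{A^d}\circ T^\bk(U_{j,\bn})=\max_{S}\one_{\{(\one/\bn)\hat{I}_{S,\bn}\cap B_r\neq\emptyset\}}\sum_{j\in S}\epsilon_j\Gamma_j^{-1/\alpha}$, and since $\hat{I}_{S,\bn}$ decreases in $S$ one may pass to $S'=\{j\in S:\epsilon_j=1\}$, replacing $\epsilon_j$ by $\one(\epsilon_j=1)$ with no error. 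Finally, your tail bound is asserted rather than proved: the paper's Proposition \ref{prop; tail sup-mea conv} requires splitting at $\Gamma_j>b_\bn^\alpha$ versus $\Gamma_j\le b_\bn^\alpha$, an exponential-tail estimate for the bounded-L\'evy-measure part, and a Khintchine/high-moment bound chosen so that $b_\bn^{-p}\prod_i(n_i+1)\to 0$; a generic ``maximal inequality for \SaS\ processes'' does not by itself beat the union bound over the lattice points.
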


To simplify the notation, we will  show
the weak convergence in  $\mathcal{M}([\0,\one])$. Note that by
\eqref{eq; series rep} we can represent, in law, the sup measure in
the left hand side of \eqref{e:Mn.conv} as 
\begin{equation} \label{e:Mn.series}
\frac{1}{b_\bn} \eta_\bn(B) = \max_{\bk/\bn \in  B } C_\alpha ^ { 1 /
  \alpha } \sum _{ j=1} ^ \infty \epsilon_j \Gamma_j ^ { - 1 / \alpha
}  
\one_{A^d} \circ T^\bk(U_{j,\bn}), \    B\in {\mathcal B}([0,1]^d)\,. 
\end{equation}

As it is often done, we prove 
Theorem \ref{thm; r s-m conv} via a truncation argument.  
We fix an  $\ell \in \mathbb{N}$ and construct a truncated
random sup-measure $\eta_{\bn,\ell}$ so that 
\begin{equation} \label{e:Mn.trunc}
\frac{1}{b_\bn} \eta_{\bn,\ell}(B) = \max_{\bk/\bn \in  B } C_\alpha ^ { 1 /
  \alpha } \sum _{ j=1} ^ \ell \epsilon_j \Gamma_j ^ { - 1 / \alpha
}  
\one_{A^d} \circ T^\bk(U_{j,\bn}), \    B\in {\mathcal B}([0,1]^d)\,. 
\end{equation}

Note that we can write 
$$
U_{j,\bn}(\bk) = \bigl( U_{j,n_1}^{(1)}(k_1),\ldots, U_{j,n_d}^{(d)}(k_d)\bigr)
$$
for $\bn=(n_1,\ldots, n_d)$ and $\bk=(k_1,\ldots, k_d)$, with
independent components in the right hand side, where $U_{j,n}^{(i)}$
has the law $\eta^{(i)}$ given in \eqref{e:eta.i}, $i=1,\ldots, d$. 
Therefore, the set of zeroes of $U_{j,\bn}$ satisfies 
$$ \mathcal{Z}(U_{j,\bn}) = \mathcal{Z}(U^{(1)}_{j,n_1} ) \times
\cdots \times \mathcal{Z}( U ^ { (d)}_{j,n_d} )\,. $$

To proceed, we need to introduce new notation. Let $S\subset
\mathbb{N}$. We set 
\begin{align*}
 &\hat{I}^{(i)}_{S,n}= \cap_{j \in S} \mathcal{Z}(U^{(i)}_{j,n}), \,
   i=1,\ldots, d, \, n\geq 1, \ 
\hat{I}_{S,\bn}  = \cap_{j \in S} \mathcal{Z}(U_{j,\bn}), \, \bn\in
  \bbn^d, \\
& I^{(i)}_S  = \cap_{j \in S}  \tilde{R}^{(i)}_{\beta_i, j }, \,
  i=1,\ldots, d, \    I_S  = \cap_{j \in S} \tilde{R}_{\bbeta,j} \,.
\end{align*}
At this stage the random objects described above do not need to be
defined on the same probability space. We need the following extension
of Theorem 5.4 of \cite{samorodnitsky:wang:2017}.

\begin{proposition}   \label{lem; joint convergence lemma}
$$ 
\left ( \frac1 \bn \hat{I}_{S,\bn}   \right )_{S  \subset \{ 1 , \ldots, \ell \}
}  \Rightarrow  (I_S)_{ S  \subset \{ 1 , \ldots, \ell \} }, \ \bn\to\binfty\,,
$$ 
in $\bigl(\mathcal{F}( [\0,\one] )\bigr)^{2^\ell}$.
\end{proposition}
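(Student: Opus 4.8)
The plan is to reduce the multi-dimensional statement to the one-dimensional convergence already available from Theorem 5.4 of \cite{samorodnitsky:wang:2017}, exploiting the product structure built into the model. The starting observation is that $\hat I_{S,\bn}=\prod_{i=1}^d \hat I^{(i)}_{S,n_i}$ and $I_S=\prod_{i=1}^d I^{(i)}_S$ are products over the $d$ coordinates, and that the families of random closed sets $(\hat I^{(i)}_{S,n_i})_{S\subset\{1,\ldots,\ell\}}$ are \emph{independent} across $i=1,\ldots,d$ (because $\eta_\bn=\eta^{(1)}_{n_1}\times\cdots\times\eta^{(d)}_{n_d}$ and the $U^{(i)}_{j,n_i}$ have independent components), and likewise the limit families $(I^{(i)}_S)_S$ are independent across $i$ (the $R^{(i)}_{\beta_i,j}$ and $V^{(i)}_j$ are independent across $i$). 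So if I can establish, for each fixed $i$, the one-dimensional convergence
\begin{equation} \label{e:one.dim.conv}
\Bigl(\tfrac1{n}\hat I^{(i)}_{S,n}\Bigr)_{S\subset\{1,\ldots,\ell\}}\Rightarrow \bigl(I^{(i)}_S\bigr)_{S\subset\{1,\ldots,\ell\}},\qquad n\to\infty,
\end{equation}
in $\bigl(\mathcal{F}([0,1])\bigr)^{2^\ell}$, then the $d$-dimensional statement follows by taking products: independence upgrades the marginal convergences to joint convergence of the $d$-tuple of families, and then I push this through the continuous map $(F_1,\ldots,F_d)\mapsto (\prod_i F_{i,S})_S$ on (a suitable subset of) $\bigl(\mathcal{F}([0,1])\bigr)^{d\cdot 2^\ell}$.

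The first real step is therefore \eqref{e:one.dim.conv}. Here I would invoke Theorem 5.4 of \cite{samorodnitsky:wang:2017}: the single-coordinate object $\hat I^{(i)}_{S,n}=\cap_{j\in S}\mathcal Z(U^{(i)}_{j,n})$ is exactly the intersection of the zero sets of the independent $\eta^{(i)}_n$-distributed random elements appearing in their one-dimensional construction, and \eqref{e:bn.j} together with the regular variation assumption \eqref{e:return.regvar} supplies precisely the scaling hypothesis their theorem requires. So \eqref{e:one.dim.conv} should be essentially a citation, modulo checking that our null-recurrent Markov chain setup matches their renewal/regenerative setup (it does, via the first-return-time description of $\varphi$ and the identification of the limiting rescaled zero set of a single $U^{(i)}_{j,n}$ with a shifted $\beta_i$-stable regenerative set $V^{(i)}_j+R^{(i)}_{\beta_i,j}$).

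The second step is the product/continuity argument. The map $H_d:(F_1,\ldots,F_d)\mapsto F_1\times\cdots\times F_d$ from $\bigl(\mathcal F([0,1])\bigr)^d$ to $\mathcal F([0,1]^d)$ is \emph{not} continuous everywhere in the Fell topology, but it is continuous at tuples $(F_1,\ldots,F_d)$ in which no $F_i$ is empty and, more to the point, one has good control near the boundary; the standard fix is to note that with probability one each $I^{(i)}_S$ is either empty or an uncountable set accumulating at its shift point $\max_{j\in S}V^{(i)}_j$, and to invoke a.s. continuity of $H_d$ at the limit (a continuous-mapping-theorem argument restricted to the full-measure set of ``nice'' tuples). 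One must carry the map simultaneously over all $2^\ell$ index sets $S$, but since the same underlying $U^{(i)}_{j,n}$'s (resp.\ $R^{(i)}_{\beta_i,j}$'s) generate every coordinate $S$, joint convergence of the vector indexed by $(i,S)$ follows from Step~1 plus independence across $i$, and then the vector-valued map $\bigl((F_{i,S})_{i,S}\bigr)\mapsto \bigl(\prod_i F_{i,S}\bigr)_S$ is applied.

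I expect the \textbf{main obstacle} to be the continuity-of-products step: one has to verify carefully that, on an event of probability one, the limiting tuple $(I^{(1)}_S,\ldots,I^{(d)}_S)$ is a point of continuity of $H_d$ for \emph{every} $S\subset\{1,\ldots,\ell\}$ simultaneously, handling in particular the cases where some intersections $I^{(i)}_S$ are empty (by Proposition \ref{prop;intersection properties of prod shifted rsc}, whether $I_S\neq\emptyset$ is governed by the cardinality of $S$ relative to $\ell(\bbeta)$, so empty coordinates genuinely occur and must be dealt with). The rest —the reduction to one dimension, the appeal to \cite{samorodnitsky:wang:2017}, and the use of independence to get joint convergence— is routine.
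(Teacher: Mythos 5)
Your proposal is correct, and it reaches the same destination as the paper by assembling the same two ingredients (the one-dimensional Theorem 5.4 of \cite{samorodnitsky:wang:2017} and the coordinatewise product/independence structure) in a different order and with different gluing lemmas. The paper first fixes $S$ and combines the $d$ independent coordinates via Corollaries 1.7.13--1.7.14 of \cite{molchanov:2017} (i.e., convergence of hitting probabilities of rectangles, which factorize by independence), and only at the end upgrades to joint convergence over all $S\subset\{1,\ldots,\ell\}$ by invoking Theorem 2.1(ii) of \cite{samorodnitsky:wang:2017}; you instead take the joint-in-$S$ statement for each fixed coordinate $i$ directly from Theorem 5.4 (which is exactly the $d=1$ case of the proposition), use independence across $i$ to get joint convergence of the full $(i,S)$-indexed vector, and then push through the Cartesian-product map. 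Both routes are valid. The one place where your write-up is more tentative than it needs to be is the step you flag as the ``main obstacle'': on a compact base space the map $H_d:(F_1,\ldots,F_d)\mapsto F_1\times\cdots\times F_d$ is in fact continuous \emph{everywhere} on $\bigl(\mathcal{F}([0,1])\bigr)^d$ in the Fell topology, empty coordinates included --- lower semicontinuity comes from product neighborhoods of a point of $F_1\times\cdots\times F_d$ inside a given open set, and upper semicontinuity from compactness together with the fact that if $x_n\in F_n$, $x_n\to x$ and $F_n\to F$ then $x\in F$ --- so no restriction to a full-measure set of ``nice'' tuples is needed, and the continuous mapping theorem applies directly; this deterministic continuity is essentially what the Molchanov corollaries cited in the paper encapsulate probabilistically.
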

\begin{proof}
By Theorem 5.4 of \cite{samorodnitsky:wang:2017}, for each
$i=1,\ldots, d$ and $S\subset \{1,\ldots, \ell\}$, 
$$
\frac{1}{n}  \left[ \hat{I}^{(i)}_{S,n}  \cap [0,1]\right]\Rightarrow
I^{(i)}_S \cap [0,1] , \quad n \to \infty\,,
$$ 
in the sense of weak convergence of random closed sets. By Corollaries
1.7.13 and 1.7.14  in \cite{molchanov:2017} applied to rectangles of
the type $\prod_{i=1}^d [a_i,b_i]$, $0\leq a_i\leq b_i\leq 1, \,
i=1,\ldots, d$, we conclude that for every $S\subset \{1,\ldots,
\ell\}$,
$$
 \frac{1}{\bn} \hat{I}_{S,\bn}  \Rightarrow  I_S , \ \bn\to\binfty\,,
$$ 
 $(\mathcal{F}( [\0,\one] )$. By Theorem 2.1 (ii) in
\cite{samorodnitsky:wang:2017}, this implies the joint convergence in
the proposition.  
\end{proof}

For $S\subset \{ 1, \ldots, \ell \}$ we define now 
\begin{equation}  \label{eq; exclusive return time I star S}
\hat{I}_{S,\bn} ^ \ast  =  \hat{I}_{S,n}  \cap  \left(  \bigcup_{ j \in
    \{1,\ldots, \ell \} \setminus S }  \mathcal{Z}(U_{j,\bn} ) \right )^c\,,
\end{equation}
the set of times where only the Markov chains corresponding to $j\in
S$ reach $0$.  Similarly we define
\begin{equation}  \label{eq; exclusive range I star S}
I_S^ \ast  =  I_S  \cap  \left(  \bigcup_{ j \in
    \{1,\ldots, \ell \} \setminus S }  \tilde{R}_{\bbeta,j} \right )^c\,,
\end{equation}
As in the case of the one-dimensional time, for large $\bn$ the sets 
$\hat{I}_{S,\bn} ^ \ast$ and $I_{S} ^ \ast$ are likely to be
alike. 

\begin{lemma}   \label{lem; asy exclusive return time}
For an open rectangle $B \subset [0,1]^d$, let $H_\bn(B)$ be the event 
\begin{equation}  \label{eq; HnB vanishing event}
H_\bn (B) : =   \bigcup_{ S \subset \{1, \ldots, \ell \} } \left(
  \left \{ \frac{\one}{\bn} \hat{I}_{S,\bn} \cap B \neq \emptyset  \right \}  \cap
  \left \{ \frac{\one}{\bn}\hat{I}^\ast_{S,\bn}  \cap B = \emptyset \right \}   \right )  
\end{equation}
Then, $ \lim_{\bn \to \binfty } P ( H_\bn(B) ) = 0 $. 
\end{lemma}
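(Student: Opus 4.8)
The goal is to show that the ``bad'' event $H_\bn(B)$, on which some intersection $\hat I_{S,\bn}$ meets $B/\bn$ but its exclusive version $\hat I^\ast_{S,\bn}$ does not, becomes negligible as $\bn\to\binfty$.

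\medskip

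\textbf{Plan of the proof.} The plan is to reduce the statement to a comparison with the continuous (limiting) objects $I_S$ and $I_S^\ast$, and then to exploit Proposition \ref{prop;intersection properties of prod shifted rsc} together with the fact that $B$ is a fixed open rectangle bounded away from the coordinate axes only through the structure of the regenerative sets. First I would fix $\ell$ and the open rectangle $B$, write $\bar B$ for its closure, and note that $H_\bn(B)$ is a finite union over $S\subset\{1,\dots,\ell\}$, so it suffices to bound each summand $P\bigl(\{\tfrac1\bn\hat I_{S,\bn}\cap B\neq\emptyset\}\cap\{\tfrac1\bn\hat I^\ast_{S,\bn}\cap B=\emptyset\}\bigr)$ separately. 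The key observation is that if $\tfrac1\bn\hat I_{S,\bn}$ hits $B$ but $\tfrac1\bn\hat I^\ast_{S,\bn}$ misses $B$, then every point of $\tfrac1\bn\hat I_{S,\bn}\cap B$ must also lie in $\mathcal Z(U_{j,\bn})$ for some $j\notin S$; in other words $\tfrac1\bn\hat I_{S\cup\{j\},\bn}\cap B\neq\emptyset$ for some $j\in\{1,\dots,\ell\}\setminus S$. Hence
\begin{equation}  \label{e:HnB.subset}
H_\bn(B)\subset \bigcup_{\substack{S\subset\{1,\dots,\ell\}\\ j\notin S}}
\Bigl\{\tfrac1\bn\hat I_{S\cup\{j\},\bn}\cap B\neq\emptyset\Bigr\},
\end{equation}
up to the possibility that the witnessing point sits on the topological boundary of $B/\bn$, which I would handle by enlarging $B$ slightly to a continuity rectangle (the boundary of any fixed rectangle is a $\tilde P_\bbeta$-null event for $I_S$ by the absolute continuity properties of shifted stable regenerative sets).

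\medskip

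\textbf{Passing to the limit.} Having \eqref{e:HnB.subset}, I would apply Proposition \ref{lem; joint convergence lemma}: for each $S'=S\cup\{j\}$ of cardinality at least two, $\tfrac1\bn\hat I_{S',\bn}\Rightarrow I_{S'}$ in $\mathcal F([\0,\one])$, hence by the Portmanteau theorem for weak convergence of random closed sets (the map $F\mapsto\one\{F\cap B\neq\emptyset\}$ is upper semicontinuous on an appropriate continuity class when $B$ is a continuity rectangle for $I_{S'}$),
\begin{equation} \label{e:limsup.bound}
\limsup_{\bn\to\binfty} P\Bigl(\tfrac1\bn\hat I_{S',\bn}\cap B\neq\emptyset\Bigr)
\leq P\bigl(I_{S'}\cap \bar B\neq\emptyset\bigr).
\end{equation}
Now $I_{S'}=\cap_{j\in S'}\tilde R_{\bbeta,j}$ is an intersection of $|S'|\geq 2$ independent shifted products of stable regenerative sets with a.s. distinct shift vectors in each coordinate (the $V^{(i)}_j$ have continuous distributions, so the hypothesis of Proposition \ref{prop;intersection properties of prod shifted rsc} holds a.s.). Therefore the argument splits according to whether $|S'|\geq \ell(\bbeta)=\min_i(1-\beta_i)^{-1}$ or not.

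\medskip

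\textbf{The main obstacle.} The delicate point is the case $2\le |S'| < \ell(\bbeta)$, where Proposition \ref{prop;intersection properties of prod shifted rsc} tells us $I_{S'}\neq\emptyset$ with probability one, so the crude bound \eqref{e:limsup.bound} is useless by itself. Here one must instead argue that the \emph{contribution of such $S'$ to $H_\bn(B)$ is already empty in the limit for a different reason}: whenever $I_{S'}\neq\emptyset$, one also has (again by Proposition \ref{prop;intersection properties of prod shifted rsc}, applied to $S'$ together with any further index, or to the full collection) that the corresponding ``exclusive'' set $I_{S}^\ast$ is nonempty a.s.\ too, because the point of $I_{S'}$ witnessing nonemptiness generically avoids the finitely many other sets $\tilde R_{\bbeta,k}$, $k\notin S$ --- more precisely, a.s.\ $I_S^\ast$ and $I_S$ have the same closure and in particular the same hitting behaviour on open rectangles. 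This is exactly the continuous analogue of the event in $H_\bn(B)$ having probability zero, and it is here that I would invoke (and, if necessary, adapt) Theorem 5.4 and the surrounding lemmas of \cite{samorodnitsky:wang:2017}, which establish precisely this ``no isolated hitting'' dichotomy for stable regenerative sets in the one-dimensional case; the product structure $\hat I_{S,\bn}=\prod_i \hat I^{(i)}_{S,n_i}$, $I_S=\prod_i I^{(i)}_S$ then lifts it to $\bbr^d$ coordinatewise. Combining the two cases --- \eqref{e:limsup.bound} forcing the limit to $0$ when $|S'|\geq\ell(\bbeta)$, and the continuous-level null-event argument covering $2\le|S'|<\ell(\bbeta)$ --- and summing over the finitely many $S$ and $j$ yields $\lim_{\bn\to\binfty}P(H_\bn(B))=0$. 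I expect the bookkeeping around continuity rectangles and the careful statement of the one-dimensional input from \cite{samorodnitsky:wang:2017} to be the part requiring the most care.
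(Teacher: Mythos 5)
There is a genuine gap. Your first reduction --- replacing $H_\bn(B)$ by the union over $S$ and $j\notin S$ of the events $\{\tfrac1\bn\hat I_{S\cup\{j\},\bn}\cap B\neq\emptyset\}$ --- discards exactly the information the lemma is about: for $|S\cup\{j\}|<\ell(\bbeta)$ these events have probabilities converging (by Propositions \ref{lem; joint convergence lemma} and \ref{prop;intersection properties of prod shifted rsc}) to $P(I_{S\cup\{j\}}\cap B\neq\emptyset)>0$, so no union bound built on them can vanish. You correctly identify this obstacle, but your proposed repair does not overcome it: it is a statement purely about the \emph{limit} objects (that $I_S^\ast$ and $I_S$ have the same hitting behaviour on open rectangles). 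Proposition \ref{lem; joint convergence lemma} gives weak convergence only of the sets $\hat I_{S,\bn}$, not of the exclusive sets $\hat I^\ast_{S,\bn}$, and the event $\{\tfrac1\bn\hat I^\ast_{S,\bn}\cap B=\emptyset\}$ involves the complement of a union of zero sets, for which you have no Portmanteau-type control. Saying that ``the continuous analogue of the event has probability zero'' is a reformulation of what must be proved, not a proof: the whole content of the lemma is the transfer of that exclusivity property from the limit to the prelimit, and that transfer is never carried out in your argument. (A minor additional point: the boundary caveat after your inclusion is unnecessary, since the witnessing point lies in the open set $B$ itself.)

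The paper's proof avoids the problem entirely by exploiting the product structure at the \emph{prelimit} level: since $\hat I_{S,\bn}=\prod_i\hat I^{(i)}_{S,n_i}$ and $\mathcal Z(U_{j,\bn})=\prod_i\mathcal Z(U^{(i)}_{j,n_i})$, one checks that if every coordinate set $\hat I^{(i)\ast}_{S,n_i}/n_i$ met $B_i$ one could assemble a point of $\hat I^\ast_{S,\bn}/\bn\cap B$; hence $H_\bn(B)$ is contained in a finite union, over $S$ and over coordinates $i$, of one-dimensional events of the same ``hit $B_i$ but not exclusively'' form, each of which vanishes by Lemma 5.5 of \cite{samorodnitsky:wang:2017}. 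Your closing sentence gestures at a coordinatewise lift, but what must be lifted is the one-dimensional \emph{prelimit} lemma, not the dichotomy for the limiting regenerative sets; as written, that step is missing and the proof does not go through.
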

\begin{proof}
Write $B=B_1 \times \cdots \times B_d$, with $B_1, \ldots,  B_d$ open
rectangles in $[0,1]$.  Denoting
$$
\hat{I}^{(i)\ast}_{S,n} := \hat{I}^{(i)}_{S,n}  \cap  \left(
  \bigcup_{ j \in \{1,\ldots, \ell \} \setminus  S }
  \mathcal{Z}(U^{(i)}_{j,n} ) \right )^c, \ i=1,\ldots, d, \ S\subset
\{1,\ldots, \ell\}, \ n=1,2,\ldots\,,
$$
we have 
\begin{align*}
 &H_\bn(B)   
\subset \bigcup_{ S \subset \{1, \ldots, \ell \} }
   \bigcup_{i=1,\ldots, d}\left(     \left \{ \frac{1}{n_i}\hat{I}^{(i)}_{S,n_i} \cap
   B_i \neq \emptyset  \right \}  \cap \left \{ \frac{1}{n_i}
   \hat{I}^{(i)\ast}_{S,n_i}  \cap B_i = \emptyset \right \}   \right) \,.
\end{align*}
The right hand side above is a finite union events, and the
probability of each one is asymptoticly  vanishing by Lemma 5.5 in
\cite{samorodnitsky:wang:2017}. 
\end{proof}

\begin{remark} \label{rk:alone} {\rm
The argument of Lemma 5.5 in \cite{samorodnitsky:wang:2017} shows also
the following version of the lemma: let 
$$
H_\bn^*= \bigcup_{a_i>0, i=1,\ldots,d} H_\bn\left( \prod_{i=1}^d
  (0,a_i)\right)\,.
$$
Then $ \lim_{\bn \to \binfty } P ( H_\bn^* ) = 0 $. We will find this
formulation useful in the sequel. 
}
\end{remark}

We are now ready to prove convergence of the truncated random
sup-measures.  
\begin{proposition}   \label{prop; truncated random sup-mea conv}
Let $\ell\geq 1$, and define a random sup-measure $\eta_{\alpha, \bbeta,
  \ell}$  by 
\begin{equation}  \label{eq; eta a b l truncated rs-m}
 \eta_{\alpha , \beta, \ell } (B)  =   \sup_{ \bt \in B }
 \sum_{j=1}^ \ell  \Gamma_j ^ { - 1 / \alpha } \one_{ \{ \bt \in
   \tilde{R}_{\beta, j } \} },  \quad B \in \mathcal{B}([0,1]^d) \,. 
 \end{equation}
Then   
$$
\frac{1}{b_\bn} \eta_{ \bn , \ell } \Rightarrow  \left(
  \frac{C_\alpha}{2} \right)^{1 / \alpha }  \eta_{\alpha, \bbeta,
  \ell}, \quad \bn \to \binfty 
$$
in the space of sup measures
$\mathcal{M}([\0,\one])$ equipped with the sup vague topology. 
\end{proposition}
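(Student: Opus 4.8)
The plan is to invoke the finite-dimensional criterion \eqref{e:sup.m.weak} for weak convergence in the sup vague topology: it suffices to fix disjoint open rectangles $B_1,\dots,B_m\subset[\0,\one]$ that are continuity sets for $(C_\alpha/2)^{1/\alpha}\eta_{\alpha,\bbeta,\ell}$ and to show that $\bigl(b_\bn^{-1}\eta_{\bn,\ell}(B_r)\bigr)_{r=1}^m\Rightarrow\bigl((C_\alpha/2)^{1/\alpha}\eta_{\alpha,\bbeta,\ell}(B_r)\bigr)_{r=1}^m$ as $\bn\to\binfty$. The first step is to put the left-hand side in combinatorial form. Because $\one_{A^d}\circ T^\bk(U_{j,\bn})=\one(\bk\in\mathcal Z(U_{j,\bn}))$, the inner sum in \eqref{e:Mn.trunc} evaluated at a site $\bk$ equals $\sum_{j\in S}\epsilon_j\Gamma_j^{-1/\alpha}$, where $S=\{j\le\ell:\bk\in\mathcal Z(U_{j,\bn})\}$, and $\bk/\bn$ ranges exactly over $\tfrac1\bn\hat{I}^\ast_{S,\bn}\cap B_r$ as $S$ runs through the subsets of $\{1,\dots,\ell\}$; hence
\[
\frac{1}{b_\bn}\eta_{\bn,\ell}(B_r)=C_\alpha^{1/\alpha}\max\Bigl\{\textstyle\sum_{j\in S}\epsilon_j\Gamma_j^{-1/\alpha}:\ \tfrac1\bn\hat{I}^\ast_{S,\bn}\cap B_r\neq\emptyset\Bigr\},
\]
with the convention that $S=\emptyset$, always available for large $\bn$, contributes the value $0$. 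Next I would use Lemma \ref{lem; asy exclusive return time}: off the event $\bigcup_{r}H_\bn(B_r)$, whose probability tends to zero, the events $\{\tfrac1\bn\hat{I}^\ast_{S,\bn}\cap B_r\neq\emptyset\}$ and $\{\tfrac1\bn\hat{I}_{S,\bn}\cap B_r\neq\emptyset\}$ agree for every $S$ and $r$. Since the latter family of subsets is closed under inclusion, inside the maximum one may discard every index $j\in S$ with $\epsilon_j=-1$ without leaving the family, so that, on this high-probability event,
\[
\frac{1}{b_\bn}\eta_{\bn,\ell}(B_r)=C_\alpha^{1/\alpha}\max\Bigl\{\textstyle\sum_{j\in S}\Gamma_j^{-1/\alpha}:\ S\subseteq\{j\le\ell:\epsilon_j=1\},\ \tfrac1\bn\hat{I}_{S,\bn}\cap B_r\neq\emptyset\Bigr\}.
\]

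The right-hand side above is a fixed measurable functional $\Phi_r$ of the finite family $\bigl(\tfrac1\bn\hat{I}_{S,\bn}\bigr)_{S\subseteq\{1,\dots,\ell\}}\in\bigl(\mathcal F([\0,\one])\bigr)^{2^\ell}$ together with the independent sequence $(\epsilon_j,\Gamma_j)_{j\le\ell}$. By Proposition \ref{lem; joint convergence lemma} one has $\bigl(\tfrac1\bn\hat{I}_{S,\bn}\bigr)_S\Rightarrow(I_S)_S$, and since $(\epsilon_j,\Gamma_j)_{j\le\ell}$ is independent of the field, the pair converges jointly to $\bigl((I_S)_S,(\epsilon_j,\Gamma_j)_{j\le\ell}\bigr)$. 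I would then apply the continuous mapping theorem to $\Phi=(\Phi_1,\dots,\Phi_m)$. The only source of discontinuity of $\Phi$ in the Fell topology is the hitting indicator $F\mapsto\one(F\cap B_r\neq\emptyset)$, which is continuous at each $F$ with $F\cap B_r\neq\emptyset\iff F\cap\overline{B_r}\neq\emptyset$, since $\mathcal F_{B_r}$ is open and, $\overline{B_r}$ being compact, $\mathcal F^{\overline{B_r}}$ is open. As $B_r$ is a continuity rectangle for $\eta_{\alpha,\bbeta,\ell}$, one checks that almost surely all the limiting sets $I_S$ satisfy this property for every $r$; finite sums and maxima of continuous maps being continuous, $\Phi$ is then a.s.\ continuous under the limit law, and together with $P(\bigcup_rH_\bn(B_r))\to0$ this yields $\bigl(b_\bn^{-1}\eta_{\bn,\ell}(B_r)\bigr)_{r=1}^m\Rightarrow\bigl(\Phi_r((I_S)_S,(\epsilon_j,\Gamma_j)_{j\le\ell})\bigr)_{r=1}^m$.

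It remains to identify the limit. Running the combinatorial reduction of the first paragraph in reverse — the family $\{S:I_S\cap B_r\neq\emptyset\}$ is again closed under inclusion and a.s.\ coincides with $\{S:I^\ast_S\cap B_r\neq\emptyset\}$ by the limiting analogue of Lemma \ref{lem; asy exclusive return time} — one rewrites $\Phi_r((I_S)_S,(\epsilon_j,\Gamma_j)_{j\le\ell})=C_\alpha^{1/\alpha}\sup_{\bt\in B_r}\sum_{j\le\ell,\,\epsilon_j=1}\Gamma_j^{-1/\alpha}\one(\bt\in\tilde R_{\bbeta,j})$, and a short computation of laws finishes the proof: the positive points $\{\Gamma_j^{-1/\alpha}:\epsilon_j=1\}$ are a $1/2$-thinning of the jump-size process $\{\Gamma_j^{-1/\alpha}\}$, whose law is that of the latter rescaled by $2^{-1/\alpha}$, and $C_\alpha^{1/\alpha}2^{-1/\alpha}=(C_\alpha/2)^{1/\alpha}$, matching \eqref{eq; eta a b l truncated rs-m}. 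The step I expect to be the real obstacle is the a.s.-continuity input for the continuous mapping theorem: one must show that, for the chosen continuity rectangles $B_r$, all $2^\ell$ limiting closed sets $I_S$ simultaneously satisfy $I_S\cap B_r\neq\emptyset\iff I_S\cap\overline{B_r}\neq\emptyset$ a.s., so that the hitting indicators appearing inside the maxima are continuous on a set of full limiting probability; the combinatorial bookkeeping (the identity $\one_{A^d}\circ T^\bk=\one_{\{\bk\in\mathcal Z(\,\cdot\,)\}}$, the passage from $\hat{I}^\ast$ to $\hat{I}$ via Lemma \ref{lem; asy exclusive return time}, and the deletion of negative-sign indices) is the conceptual core but is routine once in place.
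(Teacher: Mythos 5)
Your proposal is correct and follows essentially the same route as the paper's proof: the combinatorial rewriting of the truncated maximum as a max over subsets $S$ indexed by whether $\hat{I}^\ast_{S,\bn}$ (equivalently, off $H_\bn(B_r)$, $\hat{I}_{S,\bn}$) hits $B_r$, the monotonicity argument to discard the indices with $\epsilon_j=-1$, the joint convergence from Proposition \ref{lem; joint convergence lemma} combined with Lemma \ref{lem; asy exclusive return time}, and the $1/2$-thinning identity producing the factor $2^{-1/\alpha}$. The continuity issue you flag as the main obstacle is handled in the paper by the observation that stable subordinators do not hit fixed points, which makes every open rectangle a continuity set for the limit and the hitting functionals a.s.\ continuous at the limiting sets $I_S$.
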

\begin{proof}
We start by observing that an alternative expression for the random
sup-measure $\eta_{\alpha, \bbeta,   \ell}$  is
\begin{equation}  \label{e:eta.l.1}
\eta_{\alpha , \beta, \ell } (B) =\max_{S \subset \{1, \ldots, \ell \}
  } \one_{ \{ I_S \cap B\neq \emptyset \} }  \sum_{j \in S } \Gamma_j^{-1/\alpha} \,.
\end{equation}
Since stable subordinators do not hit fixed points, 
by  \eqref{e:sup.m.weak} 
it suffices to show that for any $m$ disjoint open rectangles
$B_r=\prod_{i=1}^d (a_i^{(r)}, b_i^{(r)}), \, r=1,\ldots, m$ 
in $ [0,1]^d$, we have a convergence of random vectors:
$$
\frac{1}{b_\bn}\left( 
\eta_{\bn, \ell} (B_1), \ldots,\eta_{\bn, \ell} (B_m)  \right)
\Rightarrow \left(\frac{C_\alpha}{2} \right)^{1 / \alpha } \left(
\eta_{\alpha, \bbeta, \ell}(B_1), \ldots,  \eta_{\alpha, \bbeta,
  \ell}(B_m) \right)\,.
$$
It is clear that for any $ r=1,\ldots, m$, on the compliment of the
event $H_\bn(B_r)$,
\begin{equation*}   
 \max_{\bk/\bn \in  B_r}    \sum _{ j=1} ^ \ell \epsilon_j \Gamma_j^{ - 1 / \alpha}  
\one_{A^d} \circ T^\bk(U_{j,\bn}) =
 \max_{S \subset \{1, \ldots, \ell \} } \one_{ \{ (\one/\bn)\hat{I}_{S,\bn} \cap
   B_r \neq \emptyset \} }  \sum_{j \in S }  \epsilon_j  \Gamma_j^{ -
   1 / \alpha}\,. 
 \end{equation*}
Since $\hat{I}_{S,\bn}$ is decreasing as the set $S$ increases, we can
choose, for a fixed $S$, the set 
$S^\prime = \{ j \in S: \epsilon_j = 1 \}$  to obtain 
\begin{equation*}    
  \max_{S \subset \{1, \ldots, \ell \} } \one_{ \{ (\one/\bn)\hat{I}_{S,\bn} \cap
   B_r \neq \emptyset \} }  \sum_{j \in S }  \epsilon_j  \Gamma_j^{ -
   1 / \alpha}
 =  \max_{S \subset \{1, \ldots, \ell \} } \one_{ \{ (\one/\bn)\hat{I}_{S,\bn} \cap
   B_r \neq \emptyset \} }  \sum_{j \in S }  \one(\epsilon_j=1)
 \Gamma_j^{ -  1 / \alpha}  \,.
\end{equation*}
Hence, on the compliment of the event $H_\bn(B_1) \cup \cdots \cup
H_\bn(B_m)$,   
\begin{align*} 
 \frac{1}{b_\bn} \left(   \eta_{\bn,\ell} (B_1), \ldots,
  \eta_{\bn,\ell} (B_\bn)    \right )&  = C_\alpha^{1/ \alpha }
  \left(    \max_{S\subset \{1, \ldots,   \ell \} } 
\one_{ \{ (\one/\bn)\hat{I}_{S,\bn} \cap B_r
   \neq \emptyset \} } \sum_{j \in S }  
\one_{ \{\epsilon_j = 1 \} }  \Gamma_j^{-1/\alpha} \right )_{r=1,\ldots,m} \,. 
\end{align*}
By Proposition \ref{lem; joint convergence lemma} the random vector in
the right hand side converges weakly as $\bn\to\binfty$ to the random
vector
\begin{align*}
 C_\alpha^{1/ \alpha } \left(    \max_{S \subset \{1, \ldots, \ell \}
  } \one_{ \{ I_S \cap B_r \neq \emptyset \} }  \sum_{j \in S }  \one_{
  \{\epsilon_j = 1 \} } \Gamma_j^{-1/\alpha}     \right
  )_{r=1,\ldots,m}\,.
\end{align*}
Since, by Lemma \ref{lem; asy exclusive return time}, the event
$H_\bn(B_1) \cup \cdots \cup H_\bn(B_m)$ has an asymptotically
vanishing probability, the random vector 
$$
\frac{1}{b_\bn} \left(   \eta_{\bn,\ell} (B_1), \ldots,
  \eta_{\bn,\ell} (B_m)    \right )
$$
converges weakly to the same limit. The claim of the proposition
follows by noticing that the thinned Poisson random measure $(\one_{
  \{\epsilon_j = 1 \} } \Gamma_j^{-1/\alpha} )_{ j \geq 1 }$ has the
same law as $(2 ^ { - 1/ \alpha }\Gamma_j^{-1/\alpha} )_{ j
  \geq 1 }$ and using \eqref{e:eta.l.1}. 
\end{proof}

We now deal with the part of the random sup measure in Theorem
\ref{thm; r s-m conv} that is left after the truncation procedure
above. The following proposition is crucial. 
\begin{proposition}   \label{prop; tail sup-mea conv}
For all $\delta>0$,  
\begin{equation}  \label{eq; tail sup-mea conv}
\lim_{\ell \to \infty}  \limsup_{\bn \to \binfty}  P \left(  \max_{
    \mathbf{0} \leq \bk \leq \mathbf{n} }  \left \vert
    \sum_{j=\ell+1}^ \infty    \epsilon_j \Gamma_j ^ { - 1 / \alpha}  
\one_{A^d} \circ T^\bk(U_{j,\bn})
 \right \vert  > \delta   \right) = 0 \,.
\end{equation}
\end{proposition}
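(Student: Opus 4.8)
The plan is to control the tail of the series representation by a second-moment-type estimate after conditioning on the Poisson arrivals and the random elements $U_{j,\bn}$. Since the $\epsilon_j$ are independent Rademacher variables independent of everything else, I would first condition on $(\Gamma_j)_{j\geq 1}$ and $(U_{j,\bn})_{j\geq 1}$ and use a maximal inequality for Rademacher sums (L\'evy's inequality, or a chaining/union bound over the finitely many points $\bk\in[\0,\bn]$). The cleanest route is a crude union bound over the $(n_1+1)\cdots(n_d+1)$ lattice points: for each fixed $\bk$, $\sum_{j>\ell}\epsilon_j\Gamma_j^{-1/\alpha}\one_{A^d}\circ T^\bk(U_{j,\bn})$ is a Rademacher sum whose conditional variance is $\sum_{j>\ell}\Gamma_j^{-2/\alpha}\one_{A^d}\circ T^\bk(U_{j,\bn})$, but a union bound over all lattice points is too lossy because the number of points grows like $b_\bn^{-\alpha}$ up to slowly varying factors (recall $b_\bn^\alpha=\mu(B_\bn)$ is the relevant normalization). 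So instead I would exploit the structure more carefully.

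The key observation is that $\one_{A^d}\circ T^\bk(U_{j,\bn})=1$ exactly when $\bk\in \mathcal{Z}(U_{j,\bn})$, i.e. all $d$ coordinate Markov chains attached to the $j$-th point return to $0$ at the respective coordinates of $\bk$. The expected number of such $\bk$ in the whole hypercube is, by construction of $\eta_\bn$, exactly $1$ (each $U_{j,\bn}$ is $\eta_\bn$-distributed and $\eta_\bn$ is supported on configurations hitting $0$ somewhere in $[\0,\bn]$, with the counting normalized by $b_\bn^\alpha=\mu(B_\bn)$). More precisely I would estimate $E\big[\#\{\bk\in[\0,\bn]:\bk\in\mathcal{Z}(U_{j,\bn})\}\big]$ and higher moments using the product structure $\mathcal{Z}(U_{j,\bn})=\prod_i\mathcal{Z}(U^{(i)}_{j,n_i})$ and the one-dimensional estimates behind \eqref{e:bn.j} and \eqref{e:eta.i}; this is the analogue of the computations in \cite{samorodnitsky:wang:2017} for $d=1$. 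Then I would write, for the tail event, $P(\cdots>\delta)\leq \delta^{-2}E\big[(\cdots)^2\big]$ after the Rademacher conditioning removes cross terms, reducing the problem to bounding $E\big[\max_\bk \sum_{j>\ell}\Gamma_j^{-2/\alpha}\one_{A^d}\circ T^\bk(U_{j,\bn})\big]$, and further to $\sum_{j>\ell}E[\Gamma_j^{-2/\alpha}]\cdot E[\#\{\bk:\bk\in\mathcal{Z}(U_{j,\bn})\}]$-type sums. Because $2/\alpha>1$, the series $\sum_{j>\ell}E[\Gamma_j^{-2/\alpha}]$ converges and its tail in $\ell$ goes to $0$; the per-term $\bk$-count expectation stays bounded uniformly in $\bn$. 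Taking $\limsup_{\bn\to\binfty}$ then $\ell\to\infty$ gives the result.

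The main obstacle is handling the \emph{maximum} over $\bk$ rather than a single $\bk$: a naive union bound is fatal, and L\'evy's maximal inequality only converts the max inside the Rademacher sum over $j$, not over $\bk$. I expect the right fix is to bound the maximum over $\bk$ by the $\ell^1$-type quantity $\sum_{j>\ell}\Gamma_j^{-1/\alpha}\#\{\bk\in[\0,\bn]:\bk\in\mathcal{Z}(U_{j,\bn})\}$ — since $\max_\bk|\sum_j \epsilon_j\Gamma_j^{-1/\alpha}\one[\bk\in\mathcal{Z}(U_{j,\bn})]|\leq \sum_{j>\ell}\Gamma_j^{-1/\alpha}\one[\mathcal{Z}(U_{j,\bn})\cap[\0,\bn]\neq\emptyset]$, and then the probability that the $j$-th chain configuration hits $0$ somewhere in $[\0,\bn]$ is, by the definition of $\eta_\bn$ via \eqref{e:eta.n}–\eqref{e:eta.i}, exactly $1$ under $\eta_\bn$ — but this is too crude because it gives a divergent sum $\sum_{j>\ell}\Gamma_j^{-1/\alpha}$. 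So the genuinely necessary step is a more delicate estimate: bound the max by $\big(\sum_{j>\ell}\Gamma_j^{-2/\alpha}\one[\mathcal{Z}(U_{j,\bn})\cap[\0,\bn]\neq\emptyset]\big)^{1/2}$ times a counting factor, controlling the interaction of finitely many ($<\ell(\bbeta)$) point sets via Proposition \ref{prop;intersection properties of prod shifted rsc} and its finite-$\bn$ approximation in Lemma \ref{lem; asy exclusive return time} and Remark \ref{rk:alone}. Concretely, I would split according to which subset $S\subset\{j>\ell\}$ of configurations simultaneously hit a common $\bk$, use that at most $\ell(\bbeta)-1$ can do so for the limiting objects (hence, with high probability, also for large $\bn$), and thereby bound the max by $\sum_{S}\big(\prod_{j\in S}\Gamma_j^{-1/\alpha}\big)$-type terms whose $\ell$-tails vanish. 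Assembling these pieces — the Rademacher symmetrization, the finite-overlap bound from Proposition \ref{prop;intersection properties of prod shifted rsc}/Remark \ref{rk:alone}, and the convergence of the relevant $\Gamma_j$-series — yields \eqref{eq; tail sup-mea conv}.
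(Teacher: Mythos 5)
There is a genuine gap. Your starting premise --- that a union bound over the $(n_1+1)\cdots(n_d+1)$ lattice points is ``fatal'' --- is the opposite of what the paper does: the union bound is the engine of the actual proof. The paper splits the tail series at $\Gamma_j=b_\bn^\alpha$. The part with $\Gamma_j>b_\bn^\alpha$ is (in law) a stationary symmetric infinitely divisible field whose L\'evy measure has bounded support, hence tails decaying faster than exponentially, so $\prod_i(1+n_i)\,P(|Y_\0|>b_\bn\delta/2)\to 0$ despite the polynomial point count. The part with $\Gamma_j\le b_\bn^\alpha$ is again handled by summing over $\bk$ (the summands are identically distributed), then rescaling the Poisson points ($b_\bn^{-\alpha}\Gamma_j\one_{A^d}(U_{j,\bn})$ has the law of $\Gamma_j$), applying Markov's inequality with an arbitrarily large even power $p$, and using Khintchine's inequality; regular variation of $b_\bn$ makes $\prod_i(n_i+1)\,b_\bn^{-p}\to 0$ for $p$ large. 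In other words, the per-point probability is made super-polynomially small, which is exactly what rescues the ``crude'' union bound you reject.

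The substitute you propose does not close. First, the Chebyshev/second-moment step never actually handles $\max_\bk$: conditioning on the Rademacher signs kills cross terms for a \emph{fixed} $\bk$, but $E[\max_\bk S_\bk^2]$ is not $\max_\bk E[S_\bk^2]$, and your fallback bound $\max_\bk|S_\bk|\le\sum_{j>\ell}\Gamma_j^{-1/\alpha}\one[\mathcal{Z}(U_{j,\bn})\cap[\0,\bn]\ne\emptyset]$ is vacuous because $\eta_\bn$ is by construction supported on configurations hitting $\0$ inside $[\0,\bn]$, so every indicator equals $1$ and the series diverges (you note this, but the proposed repair does not work either). Second, the finite-overlap input from Proposition 3.1, Lemma 4.2 and Remark 4.3 concerns a \emph{fixed finite} family $j\in\{1,\dots,\ell\}$ and its limit; it says nothing about the infinite tail $j>\ell$ at fixed $\bn$, where in fact every lattice point $\bk$ lies in infinitely many sets $\mathcal{Z}(U_{j,\bn})$ almost surely, since $P(\bk\in\mathcal{Z}(U_{j,\bn}))=b_\bn^{-\alpha}>0$ for every $j$ and these events are independent over $j$. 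So there is no almost-sure bound on the overlap multiplicity for the prelimit, and the ``at most $\ell(\bbeta)-1$ overlaps'' argument cannot be invoked. Finally, your counting heuristic is off: $E[\#\{\bk\in[\0,\bn]:\bk\in\mathcal{Z}(U_{j,\bn})\}]=\prod_i(n_i+1)\,b_\bn^{-\alpha}$, which by \eqref{e:bn.j} grows like $\prod_i n_i^{\beta_i}$ up to slowly varying factors and is not bounded in $\bn$, so the claimed uniform boundedness of the per-term $\bk$-count fails. The missing idea is precisely the truncation at $\Gamma_j=b_\bn^\alpha$ together with tail estimates strong enough to absorb the lattice-point count.
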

\begin{proof}
Clearly,
\begin{align} \label{e:split.l}
& P \left(  \max_{
    \mathbf{0} \leq \bk \leq \mathbf{n} }  \left \vert
    \sum_{j=\ell+1}^ \infty    \epsilon_j \Gamma_j ^ { - 1 / \alpha}  
\one_{A^d} \circ T^\bk(U_{j,\bn})
 \right \vert  > \delta   \right) \\
 \leq &P \left(  \max_{
    \mathbf{0} \leq \bk \leq \mathbf{n} }  \left \vert
    \sum_{j=\ell+1}^ \infty    \epsilon_j \Gamma_j ^ { - 1 / \alpha}
  \one(\Gamma_j>b_\bn^\alpha) \one_{A^d} \circ T^\bk(U_{j,\bn})
 \right \vert  > \delta/2  \right) \notag \\ 
+ &P \left(  \max_{
    \mathbf{0} \leq \bk \leq \mathbf{n} }  \left \vert
    \sum_{j=\ell+1}^ \infty    \epsilon_j \Gamma_j ^ { - 1 / \alpha}  
 \one(\Gamma_j\leq b_\bn^\alpha) \one_{A^d} \circ T^\bk(U_{j,\bn})
 \right \vert  > \delta/2  \right) \,. \notag 
\end{align}
By symmetry,
\begin{align*}
&P \left(  \max_{
    \mathbf{0} \leq \bk \leq \mathbf{n} }  \left \vert
    \sum_{j=\ell+1}^ \infty    \epsilon_j \Gamma_j ^ { - 1 / \alpha}
  \one(\Gamma_j>b_\bn^\alpha) \one_{A^d} \circ T^\bk(U_{j,\bn})
 \right \vert  > \delta/2  \right) \\
\leq 2&P \left(  \max_{
    \mathbf{0} \leq \bk \leq \mathbf{n} }  \left \vert
    \sum_{j=1}^ \infty    \epsilon_j \Gamma_j ^ { - 1 / \alpha}
  \one(\Gamma_j>b_\bn^\alpha) \one_{A^d} \circ T^\bk(U_{j,\bn})
 \right \vert  > \delta/2  \right)\,.
\end{align*}
The sum in the right hand side is a representation, in law, of the
restriction to the set $\0\leq \bk\leq \bn$ of the random field
$(b_\bn^{-1}Y_\bk, \, \bk\in\bbz^d)$, where $(Y_\bk, \, \bk\in\bbz^d)$ is
a 
stationary symmetric infinitely divisible random field defined, similarly
to the original stationary symmetric $\alpha$-stable random field in
\eqref{eq; SaS d-para random fields}, by 
\begin{equation} \label{e:bdd.levy.field}
Y_\bk = \int_E f \circ T^\bk (\bx)\, \tilde M(d\bx),    \quad \bk \in\bbz^
d\,,
\end{equation}
with the distinction that the local L\'evy measure $\tilde\rho$ of the
symmetric infinitely divisible random measure in
\eqref{e:bdd.levy.field} has the density $\alpha |x|^{-(\alpha+1)}$
  restricted to $|x|\leq 1$. See Chapter 3 in
  \cite{samorodnitsky:2016}. In particular, each $Y_\bk$ has a L\'evy
  measure with a bounded support and, hence, has (faster than)
  exponentially fast decaying tails. See e.g. \cite{sato:1999} Chapter 5. We
  conclude by the regular variation \eqref{e:bn.j} of the factors in
  \eqref{e:b.n} that for $\bn=(n_1,\ldots, n_d)$, 
\begin{align*}
&P \left(  \max_{
    \mathbf{0} \leq \bk \leq \mathbf{n} }  \left \vert
    \sum_{j=\ell+1}^ \infty    \epsilon_j \Gamma_j ^ { - 1 / \alpha}
  \one(\Gamma_j>b_\bn^\alpha) \one_{A^d} \circ T^\bk(U_{j,\bn})
 \right \vert  > \delta/2  \right) \\
\leq 2&P\bigl( \max_{
    \mathbf{0} \leq \bk \leq \mathbf{n} }
        |Y_\bk|>b_\bn(\delta/2)\bigr) \leq 2\prod_{i=1}^d (1+n_i)
P\bigl(  |Y_\0|>b_\bn(\delta/2)\bigr) \to 0
\end{align*}
as $\bn\to\binfty$. Therefore, the claim of the proposition will follow
once we prove that 
\begin{equation} \label{e:tail.1}
\lim_{\ell \to \infty}  \limsup_{\bn \to \binfty}  P \left(  \max_{
    \mathbf{0} \leq \bk \leq \mathbf{n} }  \left \vert
    \sum_{j=\ell+1}^ \infty    \epsilon_j \Gamma_j ^ { - 1 / \alpha}  
\one(\Gamma_j\leq b_\bn^\alpha) \one_{A^d} \circ T^\bk(U_{j,\bn})
 \right \vert  > \delta   \right) = 0 \,.
\end{equation}
 
To this end, let $M>0$ and set $D^M_\ell := \{ \Gamma_{\ell + 1 } \geq
M \}$. By the Strong Law of Large Numbers, 
$\lim_{\ell \to \infty} P (D_\ell^ M ) = 1 $. Therefore, we may replace
the probability in \eqref{e:tail.1} by 
$$ 
P \left(  \left \{  \max_{
    \mathbf{0} \leq \bk \leq \mathbf{n} }  \left \vert
    \sum_{j=\ell+1}^ \infty    \epsilon_j \Gamma_j ^ { - 1 / \alpha}  
\one(\Gamma_j\leq b_\bn^\alpha) \one_{A^d} \circ T^\bk(U_{j,\bn})
 \right \vert  > \delta \right \}   \cap  D^M_\ell  \right)\,.
$$
The above quantity does not exceed 
\begin{align}\label{eq; G1 tail estima  0} 
& \sum_{ \mathbf{0}  \leq \bk \leq \mathbf{n} }   P \left( \left\{ \left\vert
    \sum_{j=\ell+1}^ \infty    \epsilon_j \Gamma_j ^ { - 1 / \alpha}  
\one(\Gamma_j\leq b_\bn^\alpha) \one_{A^d} \circ T^\bk(U_{j,\bn})
 \right\vert  > \delta \right\}   \cap  D^M_\ell  \right) \\  
= &  \prod_{i=1}^d (n_i+1)    P \left( \left\{ \left\vert
    \sum_{j=\ell+1}^ \infty    \epsilon_j \Gamma_j ^ { - 1 / \alpha}  
\one(\Gamma_j\leq b_\bn^\alpha) \one_{A^d} (U_{j,\bn})
 \right\vert  > \delta \right\}   \cap  D^M_\ell  \right)\,,  \notag
 \end{align}  
since the probabilities in the sum in \eqref{eq; G1 tail estima  0} do
not depend on $\bk$.  Using symmetry, we have 
\begin{align}
  &   P \left( \left\{ \left\vert
    \sum_{j=\ell+1}^ \infty    \epsilon_j \Gamma_j ^ { - 1 / \alpha}  
\one(\Gamma_j\leq b_\bn^\alpha) \one_{A^d} (U_{j,\bn})
 \right\vert  > \delta \right\}   \cap  D^M_\ell  \right)    \notag  \\ 
\leq &   P \left(   \left\vert
    \sum_{j=\ell+1}^ \infty    \epsilon_j \Gamma_j ^ { - 1 / \alpha}  
\one(M\leq \Gamma_j\leq b_\bn^\alpha) \one_{A^d} (U_{j,\bn})
 \right\vert  > \delta   \right) \notag \\
\leq &   2P \left(   \left\vert
    \sum_{j=1}^ \infty    \epsilon_j \Gamma_j ^ { - 1 / \alpha}  
\one(M\leq \Gamma_j\leq b_\bn^\alpha) \one_{A^d} (U_{j,\bn})
 \right\vert  > \delta   \right)\,.   \label{eq; G1 tail estima 2}   
  \end{align} 
Notice that the Poisson point process $( b_\bn^{-\alpha}  \Gamma_j
\one_{A^d} (U_{j,\bn}))_j$ has the same law as the Poisson point
proces $ ( \Gamma_j)_j$.   Therefore, the probability in  (\ref{eq; G1
  tail estima 2}) coincides with 
\begin{align*}
  & P  \left( b_\bn^{-1}  \left\vert \sum_{j=1}^\infty  \epsilon_j  
\Gamma_j ^{ -1/ \alpha}  \one_{ \{  M b_\bn^{-\alpha} \leq \Gamma_j \leq
  1   \} }\right\vert >\delta\right) 
   \leq   P \left(  \left\vert b_\bn ^ {-1}  \sum_{j=j_M+1}^ \infty \epsilon_j  
\Gamma_j ^{ -1/ \alpha}  \one_{ \{  M b_\bn^{-\alpha} \leq \Gamma_j \leq
  1   \} }\right\vert  > \delta /  2 \right )   \\
   \leq & b_\bn ^ { - p } (\delta/2)^{-p} E \left\vert  \sum_{j=j_M+1}^ \infty \epsilon_j  
\Gamma_j ^{ -1/ \alpha}  \one_{ \{  M b_\bn^{-\alpha} \leq \Gamma_j \leq
  1   \} }\right\vert ^ p    
  \end{align*}
for any $p>0$, where 
$ j_M = \lceil M ^ { 1 /\alpha } \delta/2 \rceil $. If $p>0$ is large
enough, then by the regular variation \eqref{e:bn.j}, 
$$
\lim_{\bn\to\binfty}  \prod_{i=1}^d (n_i+1)  b_\bn ^ { - p }\to 0\,.
$$
Therefore, \eqref{e:tail.1} will follow once we check that for any
$p>0$ we can take $M>0$ large enough so that
\begin{equation} \label{e:finite.mom}
\limsup_{\bn\to\binfty} E \left\vert  \sum_{j=j_M+1}^ \infty \epsilon_j  
\Gamma_j ^{ -1/ \alpha}  \one_{ \{  M
  b_\bn^{-\alpha} \leq \Gamma_j \leq   1   \} }\right\vert ^ p  <\infty\,.
\end{equation}
Let us take $p=2k$, an even integer. 
By the Khintchine inequality (see e.g. (A.1) in \cite{nualart:1995}),
there is a constant $c_p\in (0,\infty)$ such that  
\begin{align*}
&E \left\vert  \sum_{j=j_M+1}^ \infty \epsilon_j  
\Gamma_j ^{ -1/ \alpha}  \one_{ \{  M
  b_\bn^{-\alpha} \leq \Gamma_j \leq   1   \} }\right\vert ^ p
\leq c_p E\left( \sum_{j=j_M+1}^ \infty \bigl( \Gamma_j ^{ -1/ \alpha}  \one_{
  \{  M   b_\bn^{-\alpha} \leq \Gamma_j \leq   1   \}
}\bigr)^2\right)^{p/2} \\
\leq &c_p E\left( \sum_{j=j_M+1}^ \infty \Gamma_j ^{
       -2/\alpha}\right)^{k} 
 \leq c_p\left( \sum_{j=j_M+1}^ \infty
  \bigl( E( \Gamma_j ^{-2k/\alpha})\bigr)^{1/k}\right)^k\,.
\end{align*}
The claim \eqref{e:finite.mom} now follows since $E\bigl( \Gamma_j ^{
       -2k/\alpha}\bigr)<\infty$ for $j>2k/\alpha$, and 
$$
E\bigl( \Gamma_j^{-2k/\alpha}\bigr)\sim j^{-2k/\alpha}, \ j\to\infty\,.
$$ 
\end{proof}

We are now ready to finish the proof of Theorem \ref {thm; r s-m
  conv}.  
\begin{proof} [Proof of Theorem \ref {thm; r s-m conv} ]
Once again, by  \eqref{e:sup.m.weak} 
it suffices to show that for any $m$ disjoint open rectangles
$B_r=\prod_{i=1}^d (a_i^{(r)}, b_i^{(r)}), \, r=1,\ldots, m$ 
in $ [0,1]^d$, we have
$$
\frac{1}{b_\bn}\left( 
\eta_{\bn} (B_1), \ldots,\eta_{\bn, \ell} (B_m)  \right)
\Rightarrow \left(\frac{C_\alpha}{2} \right)^{1 / \alpha } \left(
\eta_{\alpha, \bbeta}(B_1), \ldots,  \eta_{\alpha, \bbeta,
  \ell}(B_m) \right)\,.
$$
Since by  Proposition \ref{prop; truncated random sup-mea conv}
 $$
\frac{1}{b_\bn}\left( 
\eta_{\bn, \ell} (B_1), \ldots,\eta_{\bn, \ell} (B_m)  \right)
\Rightarrow \left(\frac{C_\alpha}{2} \right)^{1 / \alpha } \left(
\eta_{\alpha, \bbeta, \ell}(B_1), \ldots,  \eta_{\alpha, \bbeta,
  \ell}(B_m) \right)\,, 
$$
and $\eta_{\alpha, \bbeta, \ell}$ increases to $\eta_{\alpha, \bbeta}$
almost surely, we can use the ``convergence together'' argument, as in
Theorem 3.2 of \cite{billingsley:1999}. To this end we need to check
that  for each $r=1, \ldots, m$ and any $\epsilon > 0 $, 
$$
\lim_{\ell \to \infty } \limsup_{\bn \to \binfty } P \left (
  \frac{1}{b_\bn}\left \vert  \eta_{\bn}(B_r) -
    \eta_{\bn,\ell}(B_r)   \right \vert > \epsilon  \right ) =
0\,.  
$$ 
This is, however, an immediate conclusion from Proposition \ref{prop;
  tail sup-mea conv}. 
\end{proof}
\begin{remark} \label{rk:not.frechet} {\rm
Note that the limiting sup measure in Theorem \ref{thm; r s-m conv} is
Fr\'echet only when $\beta_i\leq 1/2$   for some $i=1,\ldots, d$. See
Remark \ref{rk:prop.eta}. 
}
\end{remark}

The stationary random field $\mathbf{X}$ in (\ref{eq; SaS d-para
  random fields}) induces another family of random sup measures
$\{\tilde {\eta}_\bn \}_{\bn \in \bbn^d}$  via 
\begin{equation}  \label{e:Mt.n}
\tilde {\eta}_\bn(B) : = \max_{\bk/\bn \in B } |X_\bk|  , \quad B \in
\mathcal{B}( [0, \infty)^d ) \,.
\end{equation}
This family of random sup-measures satisfies the following analogue of
Theorem \ref{thm; r s-m conv}. 
\begin{theorem}  \label{thm; r s-m abs conv}
For all $0< \alpha < 2 $ and $0 < \beta_i < 1 $, $i=1,\ldots,d$, 
\begin{equation} \label{e:Mn.conv.abs}
\frac{1}{b_\bn} \tilde{\eta}_\bn \Rightarrow
\left(\frac{C_\alpha}{2} \right)^{1/\alpha}
\max\bigl(\eta_{\alpha , \bbeta}^{(1)},\,  \eta_{\alpha ,
  \bbeta}^{(2)}\bigr), 
\quad \bn \to \binfty\,, 
\end{equation}
where $\eta_{\alpha , \bbeta}^{(1)}$ and $\eta_{\alpha , \bbeta}^{(2)}$
are two independent copies of the random sup measure defined in
\eqref{eq; eta a b sup measure unrestricted}. 
The weak convergence holds in the space of sup measures
$\mathcal{M}(\mathbb{R}^d)$ equipped with the sup vague topology. 
\end{theorem}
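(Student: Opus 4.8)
The plan is to mimic the proof of Theorem \ref{thm; r s-m conv}, keeping track of the absolute value, and observing that the absolute value forces us to keep \emph{both} the positive and the negative parts of each Poisson jump, which is the source of the two independent copies $\eta_{\alpha,\bbeta}^{(1)}, \eta_{\alpha,\bbeta}^{(2)}$ in the limit. As before, by the criterion \eqref{e:sup.m.weak} it suffices to prove joint convergence of $(\tilde\eta_\bn(B_r))_{r=1,\ldots,m}/b_\bn$ for disjoint open rectangles $B_r\subset[0,1]^d$, and as before I would do this by truncation: define
\begin{equation*}
\frac{1}{b_\bn}\tilde\eta_{\bn,\ell}(B) = \max_{\bk/\bn\in B} C_\alpha^{1/\alpha}\left\vert \sum_{j=1}^\ell \epsilon_j\Gamma_j^{-1/\alpha}\one_{A^d}\circ T^\bk(U_{j,\bn})\right\vert\,,
\end{equation*}
and similarly a truncated limit. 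The tail bound \eqref{eq; tail sup-mea conv} of Proposition \ref{prop; tail sup-mea conv} is already stated with an absolute value inside the maximum, so it applies verbatim to control the discarded terms; hence, via the ``convergence together'' argument exactly as in the proof of Theorem \ref{thm; r s-m conv}, everything reduces to the truncated statement.

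For the truncated statement I would argue as in Proposition \ref{prop; truncated random sup-mea conv}. On the complement of $H_\bn(B_r)$ (defined in \eqref{eq; HnB vanishing event}), whose probability vanishes by Lemma \ref{lem; asy exclusive return time}, the inner maximum over $\bk/\bn\in B_r$ of $\bigl\vert\sum_{j=1}^\ell\epsilon_j\Gamma_j^{-1/\alpha}\one_{A^d}\circ T^\bk(U_{j,\bn})\bigr\vert$ equals
\begin{equation*}
\max_{S\subset\{1,\ldots,\ell\}}\one_{\{(\one/\bn)\hat I_{S,\bn}\cap B_r\neq\emptyset\}}\left\vert\sum_{j\in S}\epsilon_j\Gamma_j^{-1/\alpha}\right\vert\,.
\end{equation*}
Now the key combinatorial point: since $\hat I_{S,\bn}$ is decreasing in $S$, for a given $S$ the value $\bigl\vert\sum_{j\in S}\epsilon_j\Gamma_j^{-1/\alpha}\bigr\vert$ is dominated either by $\sum_{j\in S,\epsilon_j=1}\Gamma_j^{-1/\alpha}$ (taking the subset $S^+=\{j\in S:\epsilon_j=1\}$) or by $\sum_{j\in S,\epsilon_j=-1}\Gamma_j^{-1/\alpha}$ (taking $S^-$), whichever is larger; and conversely both of these are attained by admissible subsets. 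Hence the expression above equals
\begin{equation*}
\max\left(\max_{S}\one_{\{(\one/\bn)\hat I_{S,\bn}\cap B_r\neq\emptyset\}}\sum_{j\in S}\one_{\{\epsilon_j=1\}}\Gamma_j^{-1/\alpha},\ \max_{S}\one_{\{(\one/\bn)\hat I_{S,\bn}\cap B_r\neq\emptyset\}}\sum_{j\in S}\one_{\{\epsilon_j=-1\}}\Gamma_j^{-1/\alpha}\right)\,.
\end{equation*}
By Proposition \ref{lem; joint convergence lemma}, the vector (over $r$) of these quantities converges weakly, jointly with the Poisson/Rademacher data, to the analogous expression with $\hat I_{S,\bn}/\bn$ replaced by $I_S$. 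Finally, the thinned point processes $(\one_{\{\epsilon_j=1\}}\Gamma_j^{-1/\alpha})_j$ and $(\one_{\{\epsilon_j=-1\}}\Gamma_j^{-1/\alpha})_j$ are \emph{independent} Poisson processes (independent thinnings of $(\Gamma_j)_j$ by a Bernoulli(1/2) coin), each distributed as $(2^{-1/\alpha}\Gamma_j^{-1/\alpha})_j$; invoking the representation \eqref{e:eta.l.1}, the limit is $\max\bigl((C_\alpha/2)^{1/\alpha}\eta_{\alpha,\bbeta,\ell}^{(1)}(B_r),\ (C_\alpha/2)^{1/\alpha}\eta_{\alpha,\bbeta,\ell}^{(2)}(B_r)\bigr)$ for two independent truncated copies, which is exactly the truncated version of the right-hand side of \eqref{e:Mn.conv.abs}.

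The main obstacle, and the only place where genuine care is needed beyond transcribing the earlier argument, is the combinatorial identity splitting $\bigl\vert\sum_{j\in S}\epsilon_j\Gamma_j^{-1/\alpha}\bigr\vert$ into a maximum of two ``one-sign'' sums over subsets — one must check that the indicator $\one_{\{(\one/\bn)\hat I_{S,\bn}\cap B_r\neq\emptyset\}}$ behaves correctly under passing from $S$ to $S^+$ or $S^-$ (it only gets larger, since the set gets larger, so no admissible configuration is lost), and that this monotonicity is preserved in the limit where $\hat I_{S,\bn}/\bn$ becomes $I_S$. Once that identity is in hand, the independence of the two sign-thinned Poisson processes is standard, and the rest is identical to the proof of Theorem \ref{thm; r s-m conv}.
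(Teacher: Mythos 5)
Your proposal is correct and follows essentially the same route as the paper: truncation controlled by Proposition \ref{prop; tail sup-mea conv} and the ``convergence together'' argument, the reduction on the complement of $H_\bn(B_1)\cup\cdots\cup H_\bn(B_m)$ to a maximum over subsets $S$, the splitting of $\bigl\vert\sum_{j\in S}\epsilon_j\Gamma_j^{-1/\alpha}\bigr\vert$ into the maximum of the two one-sign sums via the monotonicity of $\hat I_{S,\bn}$ in $S$, joint convergence from Proposition \ref{lem; joint convergence lemma}, and the observation that the two sign-thinned point processes are independent copies of $(2^{-1/\alpha}\Gamma_j^{-1/\alpha})_j$. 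The combinatorial identity you flag as the main point of care is exactly the step the paper relies on (it is the same device already used in Proposition \ref{prop; truncated random sup-mea conv}), and your justification of it is sound.
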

\begin{proof}
Once again, we will show the weak convergence in
$\mathcal{M}([\0,\one])$. We continue using the notation of the proof of
Theorem \ref{thm; r s-m conv}. The same argument as in the that
proof works once we show that, in the obvious notation, for any
$\ell=1,2,\ldots$, 
\begin{equation} \label{eq; truncated abs conv}
\frac{1}{b_\bn}\tilde{\eta}_{\bn,\ell} \Rightarrow  
\frac{C_\alpha^{1/ \alpha }}{2} 
\max\bigl(\eta_{\alpha , \bbeta,\ell}^{(1)},\,  \eta_{\alpha ,
  \bbeta,\ell}^{(2)}\bigr) , \quad \bn \to \binfty\,,   
\end{equation}
where
\begin{equation}  \label{eq; truncated abs rs-mea}
\frac{1}{b_\bn} \tilde{\eta}_{\bn,\ell}(B) = \max_{\bk/\bn \in B }  
\left\vert\sum _{ j=1} ^ \ell \epsilon_j \Gamma_j ^ { - 1 /\alpha}   
\one_{A^d} \circ T^\bk(U_{j,\bn})\right\vert, \    B\in {\mathcal B}([0,1]^d)\,.  
\end{equation}

Outside the vanishing event $H_n(B_1) \cup \cdots \cup H_n(B_m)$ we
now have 
\begin{align*}
\frac{1}{b_\bn} \left(   \tilde{\eta}_{\bn,\ell} (B_1), \ldots,
 \tilde{\eta}_{\bn,\ell} (B_\bn)    \right ) 
=&\,  C_\alpha^{1/ \alpha }
  \left(  \max\left[  \max_{S\subset \{1, \ldots,   \ell \} } 
\one_{ \{ (\one/\bn)\hat{I}_{S,\bn} \cap B_r
   \neq \emptyset \} } \sum_{j \in S }  
\one_{ \{\epsilon_j = 1 \} }  \Gamma_j^{-1/\alpha},\right.\right. \\
&\left.\left. \max_{S\subset \{1, \ldots,   \ell \} } 
\one_{ \{ (\one/\bn)\hat{I}_{S,\bn} \cap B_r
   \neq \emptyset \} } \sum_{j \in S }  
\one_{ \{\epsilon_j = -1 \} }  \Gamma_j^{-1/\alpha}\right]\right)_{r=1,\ldots,m} \,. 
\end{align*}
Using again Proposition \ref{lem; joint convergence lemma} and  Lemma
\ref{lem; asy exclusive return time} we conclude that, as
$\bn\to\binfty$, 
\begin{align*}
\frac{1}{b_\bn} \left(   \tilde{\eta}_{\bn,\ell} (B_1), \ldots,
 \tilde{\eta}_{\bn,\ell} (B_\bn)    \right ) \Rightarrow
&\,  C_\alpha^{1/ \alpha }
  \left(  \max\left[  \max_{S\subset \{1, \ldots,   \ell \} } 
\one_{ \{  {I}_{S} \cap B_r
   \neq \emptyset \} } \sum_{j \in S }  
\one_{ \{\epsilon_j = 1 \} }  \Gamma_j^{-1/\alpha},\right.\right. \\
&\left.\left. \max_{S\subset \{1, \ldots,   \ell \} } 
\one_{ \{  {I}_{S} \cap B_r
   \neq \emptyset \} } \sum_{j \in S }  
\one_{ \{\epsilon_j = -1 \} }  \Gamma_j^{-1/\alpha}\right]\right)_{r=1,\ldots,m} \,. 
\end{align*}
The statement of the theorem now follows since  $(\one_{ \{ \epsilon_j=1 \} } \Gamma_j^{-1/\alpha})_j$ and $(\one_{
  \{ \epsilon_j=-1 \} } \Gamma_j^{-1/\alpha})_j$ are two independent Poisson
random measures, each with the same law as $(2 ^ { - 1/ \alpha
}\Gamma_j^{-1/\alpha} )_{ j \geq 1 }$, and using \eqref{e:eta.l.1}.
\end{proof}

\begin{remark} \label{rk:not.Fr}  {\rm 
It is interesting to observe that in the case $0<\beta_i\leq 1/2$ for
some $i=1,\ldots, d$, the sup measure $\eta_{\alpha,\bbeta}$ is a
Fr\'echet random sup measure, and so \eqref{e:Mn.conv} can be
reformulated as 
\begin{equation} \label{e:Mn.conv1}
\frac{1}{b_\bn} \tilde{\eta}_\bn \Rightarrow
C_\alpha^{1/ \alpha}  \eta_{\alpha , \bbeta}, 
\quad \bn \to \binfty\,. 
\end{equation}
See Remark \ref{rk:prop.eta}. However, if $0<\beta_i> 1/2$ for
all $i=1,\ldots, d$, then the random sup measure
$\eta_{\alpha,\bbeta}$ is not max-stable, so \eqref{e:Mn.conv1} is no
longer a valid statement of Theorem \ref{thm; r s-m abs conv}. 
}
\end{remark}
 
\section{Convergence of the partial maxima
  processes}  \label{Convergence of the partial maxima processes}

In this section we prove another version of a functional extremal
theorem for the  stationary random field $\mathbf{X}$ in (\ref{eq; SaS
  d-para   random fields}). This time we will be working in the space
$D(\mathbb{R}_+^d)$, and the limit will itself be a random field. 
The random field $\BX$ induces an array of partial maxima random fields
$\{ M_\bn \}$ by 
\begin{equation*}
M_{\bn}(\bt) : =  \max_{ \mathbf{0} \leq \bk \leq \bn\bt } X_\bk,
\quad \bt \in \mathbb{R}_+^d\,.
\end{equation*}
The random sup measure $\eta_{\alpha, \bbeta}$ in (\ref{eq; eta a b
  sup measure unrestricted}) also induces a random field $W_{\alpha,
  \bbeta}$, by 
\begin{equation*}
W_{\alpha, \bbeta} (\bt) : = \eta_{\alpha, \bbeta}( [\0, \bt ]), \quad
\bt \in \mathbb{R}_+^d\,.
\end{equation*}
\begin{remark}\label{rk:ss.field}{\rm
It follows immediately from Remark \ref{rk:prop.eta} that the random
field $\bigl( W_{\alpha, \bbeta} (\bt), \, \bt\in \mathbb{R}_+^d\bigr)$ is 
self-similar, in the sense that for any $c_1>0, \ldots, c_d>0$
$$
\bigl( W_{\alpha, \bbeta} ((c_1t_1,\ldots, c_dt_d)), \, \bt\in
\mathbb{R}_+^d\bigr) \eid \left(
\prod_{i=1}^d c_i^{(1-\beta_i)/ \alpha}  W_{\alpha, \bbeta} (\bt), \,
\bt\in \mathbb{R}_+^d \right)\,.
$$
This is, of course, what a multivariate version of Lamperti's theorem
requires from the limit in any functional extremal theorem; see
e.g. Theorem 8.1.5 in \cite{samorodnitsky:2016}. 
}
\end{remark}
The following functional extremal theorem is the main result of this
section. 
\begin{theorem}  \label{thm; partial maxima pro conv}
 For all $0< \alpha < 2 $ and $0 < \beta_i < 1 $, $i=1,\ldots,d$, 
\begin{equation} \label{eq; partial maxima pro conv eq}
\left( \frac{1}{b_\bn} {M}_\bn (\bt), \, t  \in \mathbb{R}_+^d  \right )
\Rightarrow  \left(  \left( \frac{C_\alpha}{2} \right)^{ 1 / \alpha }
  W_{\alpha, \bbeta} (\bt), \, \bt \in  \mathbb{R}_+^d  \right )
\end{equation}
in the Skorohod $J_1$ topology on the space $D(\mathbb{R}_+^d)$. 
\end{theorem}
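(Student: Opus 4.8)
The plan is to deduce Theorem \ref{thm; partial maxima pro conv} from the sup-measure convergence already established in Theorem \ref{thm; r s-m conv}, using the standard device that the partial-maxima field is obtained from the random sup measure by evaluating it on the lower-left rectangles $[\0,\bt]$. Concretely, $M_\bn(\bt) = \eta_\bn([\0,\bt])$ in the notation of \eqref{e:M.n} (up to the harmless boundary issue $\bn\bt$ versus $\bk/\bn\in[\0,\bt]$), and $W_{\alpha,\bbeta}(\bt)=\eta_{\alpha,\bbeta}([\0,\bt])$ by definition. So \eqref{e:Mn.conv} gives us convergence of the underlying objects in the sup-vague topology, and the task is to upgrade this to convergence of the evaluation maps $\bt\mapsto \eta(\, [\0,\bt]\,)$ in the $J_1$ topology on $D(\mathbb{R}_+^d)$. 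I would first reduce, as in the proof of Theorem \ref{thm; r s-m conv}, to the hypercube $[\0,\one]$ and to the truncated sup measures $\eta_{\bn,\ell}$, $\eta_{\alpha,\bbeta,\ell}$, since $\eta_{\alpha,\bbeta,\ell}\uparrow\eta_{\alpha,\bbeta}$ a.s. and Proposition \ref{prop; tail sup-mea conv} controls the tail uniformly in $\bn$; a convergence-together argument then handles the passage $\ell\to\infty$ exactly as before, provided the truncated statement holds in $D$.

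For the truncated statement I would work with the explicit representation \eqref{e:eta.l.1}: on the complement of the vanishing event of Lemma \ref{lem; asy exclusive return time} (in the stronger form of Remark \ref{rk:alone}, which is designed precisely to cover all rectangles of the form $\prod_i(0,a_i)$ simultaneously), the partial-maxima field coincides with
$$
\bt \longmapsto C_\alpha^{1/\alpha}\max_{S\subset\{1,\ldots,\ell\}} \one_{\{(\one/\bn)\hat I_{S,\bn}\cap[\0,\bt)\neq\emptyset\}}\sum_{j\in S}\one_{\{\epsilon_j=1\}}\Gamma_j^{-1/\alpha},
$$
a finite maximum of indicator-type step functions driven by the random closed sets $\hat I_{S,\bn}$. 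By Proposition \ref{lem; joint convergence lemma} the whole family $((\one/\bn)\hat I_{S,\bn})_S$ converges jointly in $(\mathcal F([\0,\one]))^{2^\ell}$ to $(I_S)_S$. The key point is then to show that the functional $F\mapsto (\bt\mapsto \one_{\{F\cap[\0,\bt)\neq\emptyset\}})$, sending a random closed set to the ``lower cumulative'' indicator field, is continuous (in the $J_1$ sense on $D(\mathbb{R}_+^d)$) at the limiting sets $I_S$ — i.e.\ that the hitting function of $I_S$ over down-sets is a.s.\ well-behaved. Here I would use the structure of $I_S = \cap_{j\in S}\tilde R_{\bbeta,j}$ as a (shifted) product of stable regenerative sets: a.s.\ its coordinate projections are closures of ranges of subordinators, which have no isolated atoms at deterministic levels and whose ``first-passage'' boundaries are strictly monotone, so small perturbations of the set in the Fell topology produce only small $J_1$-perturbations of the associated step field. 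Combining the joint Fell-convergence with this continuity (via the continuous mapping theorem, applied coordinate-by-coordinate in $r$ for the finitely many $S$), one gets the truncated convergence in $D([\0,\one]^{})$, and after the $\ell\to\infty$ argument, convergence in $D(\mathbb R_+^d)$ by the usual localization to $[\0,\bt_0]$ for arbitrary $\bt_0$ together with the self-similarity noted in Remark \ref{rk:ss.field}.

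The main obstacle, I expect, is precisely the last continuity claim: passing from convergence of random sup measures in the (rather weak) sup-vague topology to convergence of the cumulative maxima in the (much finer) Skorohod $J_1$ topology is not automatic, because $J_1$ convergence in several variables is delicate — one must control the locations of the jumps of $\bt\mapsto\eta([\0,\bt])$, not merely its values on a generating class of rectangles. The way around this is to exploit that, after truncation, there are only finitely many ``jump sets'' $I_S$, each a product of stable regenerative sets, and to verify that the map ``set $\mapsto$ its lower hitting-indicator field'' is continuous at such sets by checking that (i) the relevant boundary functions $\bt\mapsto$ distance-type quantities are a.s.\ continuous and strictly increasing where they matter, and (ii) the shift vectors $V^{(i)}_j$ are a.s.\ distinct so that no two jump sets share a boundary, which is exactly the hypothesis under which Proposition \ref{prop;intersection properties of prod shifted rsc} applies. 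I would also need to confirm that the limit field $W_{\alpha,\bbeta}$ genuinely lies in $D(\mathbb R_+^d)$ (right-continuity with limits along monotone paths), which again follows from the upper semicontinuity of $\eta_{\alpha,\bbeta}$ recorded after \eqref{eq; eta a b sup measure unrestricted} and the monotonicity of $\bt\mapsto[\0,\bt]$.
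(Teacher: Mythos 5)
Your proposal follows essentially the same route as the paper: truncate at level $\ell$, use Remark \ref{rk:alone} to replace the truncated maxima field by the step field driven by the sets $\hat{I}_{S,\bn}$, invoke the joint Fell convergence of Proposition \ref{lem; joint convergence lemma}, and close with the convergence-together argument based on Proposition \ref{prop; tail sup-mea conv}. The continuity step you correctly single out as the crux is made precise in the paper by passing to a Skorohod representation and noting that, by the product structure of $I_S$, each indicator $\one_{\{I_S\cap[\0,\bt]\neq\emptyset\}}$ is the indicator of the orthant $\{\bt\geq \bt_S\}$ with $\bt_S$ the vector of coordinatewise first-hitting points, so that the convergence $\bt_{S,\bn}\to\bt_S$ together with the a.s.\ distinctness of the $2^\ell$ corner points yields homeomorphisms of $[\0,\one]$ converging to the identity and hence a.s.\ $J_1$ convergence.
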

\begin{proof} 
The usual reference for multiparameter weak convergence is
\cite{straf:1972}. For our purposes there is little difference
between the properties of weak convergence in  $D(\mathbb{R}_+^d)$ for
$d=1$ and $d>1$. We will show weak convergence in $D([\0,\one])$,
and we use the series representation \eqref{eq; series rep}. By
\eqref{e:Mn.series} we can write, in law, 
\begin{equation} \label{e:Mn.series.t}
\frac{1}{b_\bn} {M}_\bn(\bt) = \max_{\0\leq \bk/\bn \leq\bt }
C_\alpha ^ {1 /\alpha } \sum _{ j=1} ^ \infty \epsilon_j 
\Gamma_j^{-1/ \alpha}   
\one_{A^d} \circ T^\bk(U_{j,\bn}),  \, \bt \in    [\0,\one]\,.
\end{equation}
We again use a truncation argument.  
For $\ell \in \mathbb{N}$, we define random fields ${M}_{\bn,\ell}$
by 
\begin{equation} \label{e:Mn.series.t.l}
\frac{1}{b_\bn} {M}_{\bn,\ell}(\bt) = \max_{\0\leq \bk/\bn \leq\bt }
C_\alpha ^ {1 /\alpha } \sum _{ j=1}^\ell \epsilon_j 
\Gamma_j^{-1/ \alpha}   
\one_{A^d} \circ T^\bk(U_{j,\bn}),  \, \bt \in   [\0,\one]\,.
\end{equation}
Similarly, starting with the truncated random sup measure
$\eta_{\alpha, \bbeta, \ell}$ we define a random field 
$W_{\alpha, \bbeta, \ell}$ by 
$$ 
W_{\alpha, \bbeta, \ell} (\bt) : = \eta_{\alpha, \bbeta, \ell} ( [\0, \bt] ),
\quad \bt \in  [\0,\one]\,.
$$
We start by proving that 
\begin{equation} \label{eq;maxima.conv.l}
\left( \frac{1}{b_\bn} {M}_{\bn,\ell} (\bt), \, t  \in \mathbb{R}_+^d  \right )
\Rightarrow  \left(  \left( \frac{C_\alpha}{2} \right)^{ 1 / \alpha }
  W_{\alpha, \bbeta,\ell} (\bt), \, \bt \in  [\0,\one]  \right )\,.
\end{equation}
Note that the representation \eqref{e:eta.l.1}  can be written, in law, as 
 \begin{equation}  \label{e:repr.Wl}
W_{\alpha , \bbeta, \ell } (\bt) =\max_{S \subset \{1, \ldots, \ell \}
  } \one_{ \{ I_S \cap [\0,\bt]\neq \emptyset \} }  \, 2^{1/\alpha}\sum_{j \in S }
 \one_{ \{\epsilon_j = 1 \} } \Gamma_j^{-1/\alpha}, \quad \bt \in [0,1]^d\,.
\end{equation}
Furthermore,  from the argument in Proposition \ref{prop; truncated random
  sup-mea conv} we know that outside of an event $A_\bn$ whose
probability 
goes to zero as $\bn\to\binfty$, the random field $\bigl( (1/b_\bn) M_{\bn,\ell} 
  (\bt), \, \bt \in [\0,\one]\bigr)$ coincides with the random field 
$$
    \max_{S\subset \{1, \ldots,   \ell \} } 
\one_{ \{ (\one/\bn)\hat{I}_{S,\bn} \cap [\0,\bt]
   \neq \emptyset \} }  C_\alpha^{1/\alpha} \sum_{j \in S }  
\one_{ \{\epsilon_j = 1 \} }  \Gamma_j^{-1/\alpha}, \quad \bt \in [\0,\one]\,;
$$
see Remark \ref{rk:alone}. Therefore, \eqref{eq;maxima.conv.l} will
follows once we prove that 
\begin{align} \label{eq;maxima.conv.l1}
&\left(  \max_{S\subset \{1, \ldots,   \ell \} } 
\one_{ \{ (\one/\bn)\hat{I}_{S,\bn} \cap [\0,\bt]
   \neq \emptyset \} }\sum_{j \in S }
 \one_{ \{\epsilon_j = 1 \} } \Gamma_j^{-1/\alpha}, \, \bt \in
  [\0,\one]   \right ) \\
\Rightarrow  &\left(   
  \max_{S \subset \{1, \ldots, \ell \}
  } \one_{ \{ I_S \cap [\0,\bt]\neq \emptyset \} }  \, \sum_{j \in S }
 \one_{ \{\epsilon_j = 1 \} } \Gamma_j^{-1/\alpha}, \, \bt \in
               [\0,\one]  \right )\,. \notag 
\end{align}
Since the Fell topology on $\mathcal{F}( [0,1]^d )$ is separable and
metrizable (see \cite{salinetti:wets:1981}), by the Skorohod
representation theorem, we can find a common probability space for
$\bigl( \hat{I}_{S,\bn}, \, S\subset \{1, \ldots,   \ell \}\bigr)$ and 
$\bigl(  I_s, \, S\subset \{1, \ldots,   \ell \}\bigr)$ such that the
convergence in Proposition \ref{lem; joint convergence lemma} becomes
the almost sure convergence. On that probability space we will prove
a.s. convergence in \eqref{eq;maxima.conv.l1}. 

For $i=1,\ldots, d$ and $S\subset \{1, \ldots,   \ell \}$ denote 
$$
t_{S,\bn}^{(i)} = \inf\bigl\{ t>0:\, t\in n_i^{-1}\hat{I}^{(i)}_{S,n_i}\bigr\}, \
\ \ t_{S}^{(i)} = \inf\bigl\{ t>0:\, t\in I^{(i)}_{S}\bigr\}\,.
$$
Then the a.s. convergence in Proposition \ref{lem; joint convergence
  lemma} implies that $t_{S,\bn}^{(i)}\to t_{S}^{(i)}$ a.s. as
$\bn\to\binfty$ for every $i=1,\ldots, d$ and $S\subset \{1, \ldots,
\ell \}$. If we denote $\bt_{S,\bn}=\bigl( t_{S,\bn}^{(1)}, \ldots,
t_{S,\bn}^{(d)}\bigr)$ and $\bt_{S}=\bigl( t_{S}^{(1)}, \ldots,
t_{S}^{(d)}\bigr)$ for $S\subset \{1, \ldots, \ell \}$, then
$\bt_{S,\bn}\to \bt_{S}$ a.s. as $\bn\to\binfty$. Since stable
subordinators do not hit fixed points, the $2^\ell$ points $ \bt_{S},
S\subset \{1, \ldots, \ell \}$ are distinct. Furthermore, given
$(\epsilon_j,\Gamma_j)_{j\in S}$, these points
determine the realization of the random field in the right hand side
of \eqref{eq;maxima.conv.l1}, while the $2^\ell$ points $ \bt_{S,\bn},
S\subset \{1, \ldots, \ell \}$ determine the realization of the random
field in the left hand side of \eqref{eq;maxima.conv.l1}. Therefore,
any homeomorphism of $[0,1]^d$ onto itself  that fixes the origin
and moves $ \bt_{S,\bn}$
to $ \bt_{S}$ for each $S\subset \{1, \ldots, \ell \}$ makes the
values of the field in the left hand side of \eqref{eq;maxima.conv.l1}
equal to the values of the field in the right hand side of
\eqref{eq;maxima.conv.l1}. The convergence $\bt_{S,\bn}\to \bt_{S}$
for each $S$ guarantees that these homeomorphisms can be chosen to
converge to the identity in the supremum norm. Hence a.s. convergence
in \eqref{eq;maxima.conv.l1}. 
  
To complete the proof we use, once again, the ``convergence together''
argument in Theorem 3.2 of \cite{billingsley:1999}. The first step to
this end is to show that 
\begin{equation} \label{e:bill.1}
\bigl( W_{\alpha, \bbeta, \ell} (\bt), \,
\bt \in [\0,\one]\bigr) \Rightarrow \bigl( W_{\alpha, \bbeta} (\bt), \,
\bt \in [\0,\one]\bigr)
\end{equation} 
as $\ell\to\infty$ in the Skorohod $J_1$ topology on the space
$D([\0,\one])$.  Since we can represent the random field in the
right hand side of \eqref{e:bill.1}, in law, as 
 \begin{equation}  \label{e:repr.W}
W_{\alpha , \bbeta} (\bt) =\sup_{S \subset \bbn} \one_{ \{ I_S \cap
  [\0,\bt]\neq \emptyset \} }  \, 2^{1/\alpha}\sum_{j \in S } 
 \one_{ \{\epsilon_j = 1 \} } \Gamma_j^{-1/\alpha}, \quad \bt \in
 [\0,\one]\,, 
\end{equation}
we will use the representations in law, \eqref{e:repr.Wl} and
\eqref{e:repr.W}, and prove a.s. convergence of the random fields in
the right hand sides of these representations. Furthermore, we will
show this a.s. convergence in the uniform distance. To this end, fix
$\bt\in [\0,\one]$ and 
note that by Proposition \ref{prop;intersection properties of prod
  shifted rsc}, with probability 1, any set  $S \subset \bbn$  such that 
$I_S \cap
  [\0,\bt]\not=\emptyset$ must be of cardinality smaller than
$\min_{i=1,\ldots, d}(1- \beta_i)^{-1} $.  Furthermore, for any such
$S$, the set $S\cap \{1,\ldots,\ell\}$ contributes to the maximum in
the right hand side in \eqref{e:repr.Wl}. Therefore, 
\begin{align*}
0 &\leq \sup_{S \subset \bbn} \one_{ \{ I_S \cap 
 [\0,\bt]\neq \emptyset \} }  \, 2^{1/\alpha}\sum_{j \in S } 
\one_{ \{\epsilon_j = 1 \} } \Gamma_j^{-1/\alpha} -
\max_{S \subset \{ 1,\ldots, \ell\}} \one_{ \{ I_S \cap
 [\0,\bt]\neq \emptyset \} }  \, 2^{1/\alpha}\sum_{j \in S } 
\one_{ \{\epsilon_j = 1 \} } \Gamma_j^{-1/\alpha} \\
&\leq 2^{1/\alpha} \min_{i=1,\ldots, d}(1- \beta_i)^{-1}
 \Gamma_{\ell+1}^{-1/\alpha}\,.
\end{align*}
That is,
\begin{align*}
&\sup_{\bt\in [\0,\one]} \left| \sup_{S \subset \bbn} \one_{ \{ I_S \cap
  [\0,\bt]\neq \emptyset \} }  \, 2^{1/\alpha}\sum_{j \in S } 
 \one_{ \{\epsilon_j = 1 \} } \Gamma_j^{-1/\alpha} -
\max_{S \subset \{ 1,\ldots, \ell\}} \one_{ \{ I_S \cap
  [\0,\bt]\neq \emptyset \} }  \, 2^{1/\alpha}\sum_{j \in S } 
 \one_{ \{\epsilon_j = 1 \} } \Gamma_j^{-1/\alpha}\right| \\
&\leq 2^{1/\alpha} \min_{i=1,\ldots, d}(1- \beta_i)^{-1}
  \Gamma_{\ell+1}^{-1/\alpha} \to 0
\end{align*}
as $\ell\to\infty$, proving the a.s. convergence in the uniform
distance.  

For the  second ingredient in the ``convergence together'' 
argument we use again the uniform distance and prove that for any
$\epsilon>0$, 
$$
\lim_{\ell \to \infty} \limsup_{\bn \to \binfty} 
P \left( \frac{1}{b_\bn} \sup_{\bt\in [\0,\one]}|{M}_\bn (\bt) -
  {M}_{\bn,\ell} (\bt)|>\epsilon\right) =0,.
$$
By \eqref{e:Mn.series.t} and \eqref{e:Mn.series.t.l} it is enough to
prove that 
$$
\lim_{\ell \to \infty} \limsup_{\bn \to \binfty} P \left(
\sup_{\bt\in [\0,\one]}\max_{\0\leq \bk/\bn \leq\bt }
 \left| \sum _{ j=\ell+1} ^ \infty \epsilon_j 
\Gamma_j^{-1/ \alpha}   
\one_{A^d} \circ T^\bk(U_{j,\bn})\right|>\epsilon \right)=0\,.
$$
This is, however, an immediate consequence of  
Proposition \ref{prop; tail sup-mea conv}.
\end{proof}

Theorem \ref{thm; partial maxima pro conv} has a natural counterpart
for the partial maxima of the absolute values of the random field (\ref{eq; SaS d-para
  random fields}). It can be obtained along the same lines as we
obtained Theorem \ref{thm; r s-m abs conv}. We omit the argument. 
 
\begin{theorem}  \label{thm; abs partial maxima pro conv}
Let $0< \alpha < 2 $ and $0 < \beta_i < 1 $, $i=1,\ldots,d$. Define 
\begin{equation*}
\tilde{M}_{\bn}(\bt) : =  \max_{ \mathbf{0} \leq \bk \leq \bn\bt }
\vert X_\bk \vert , \quad \bt \in \mathbb{R}_+^d\,.
\end{equation*}
Then 
\begin{equation} \label{eq; abs partial maxima pro conv eq}
\left( \frac{1}{b_\bn} \tilde{M}_\bn (\bt),\, \bt  \in \mathbb{R}_+^d
\right )  \Rightarrow  \left(   
\left(\frac{C_\alpha}{2}\right)^{1/\alpha}
\max\bigl( W^{(1)}_{\alpha,\bbeta} (\bt), W^{(2)}_{\alpha,\bbeta}
(\bt)\bigr),\, \bt \in  \mathbb{R}_+^d  \right ) 
\end{equation}
in the Skorohod $J_1$ topology on the space $D(\mathbb{R}_+^d)$. Here
$\bigl( W^{(1)}_{\alpha,\bbeta} (\bt), \, \bt \in  \mathbb{R}_+^d
\bigr)$ and $\bigl( W^{(2)}_{\alpha,\bbeta} (\bt), \, \bt \in  \mathbb{R}_+^d
\bigr)$ are two independent copies of the limiting process in Theorem
\ref{thm; partial maxima pro conv}. 
\end{theorem}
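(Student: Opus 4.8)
The plan is to run the proof of Theorem \ref{thm; partial maxima pro conv} essentially verbatim, inserting the two-sided modification that produced Theorem \ref{thm; r s-m abs conv} from Theorem \ref{thm; r s-m conv}. As in all the earlier arguments it suffices to establish weak convergence in $D([\0,\one])$, and one uses the usual reference \cite{straf:1972} for multiparameter $J_1$ convergence. By the series representation \eqref{eq; series rep} one writes, in law,
\[
\frac{1}{b_\bn}\tilde M_\bn(\bt)=\max_{\0\leq\bk/\bn\leq\bt}C_\alpha^{1/\alpha}\left|\sum_{j=1}^\infty\epsilon_j\Gamma_j^{-1/\alpha}\one_{A^d}\circ T^\bk(U_{j,\bn})\right|,\quad\bt\in[\0,\one],
\]
truncates at level $\ell$ by replacing $\sum_{j=1}^\infty$ with $\sum_{j=1}^\ell$ to obtain $\tilde M_{\bn,\ell}$, and defines the truncated limit as $\bigl(C_\alpha/2\bigr)^{1/\alpha}\max\bigl(W^{(1)}_{\alpha,\bbeta,\ell},W^{(2)}_{\alpha,\bbeta,\ell}\bigr)$, where $W^{(i)}_{\alpha,\bbeta,\ell}(\bt)=\eta^{(i)}_{\alpha,\bbeta,\ell}([\0,\bt])$ and $\eta^{(1)}_{\alpha,\bbeta,\ell},\eta^{(2)}_{\alpha,\bbeta,\ell}$ are independent copies of the truncated random sup measure of Proposition \ref{prop; truncated random sup-mea conv}.

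I would then supply the two ingredients of the ``convergence together'' argument of Theorem 3.2 in \cite{billingsley:1999}. The tail ingredient, namely that
\[
\lim_{\ell\to\infty}\limsup_{\bn\to\binfty}P\left(\frac{1}{b_\bn}\sup_{\bt\in[\0,\one]}\bigl|\tilde M_\bn(\bt)-\tilde M_{\bn,\ell}(\bt)\bigr|>\epsilon\right)=0,
\]
is immediate from Proposition \ref{prop; tail sup-mea conv}, since passing to absolute values inside $\tilde M_\bn$ only enlarges the event already controlled there. The other ingredient, that $\max\bigl(W^{(1)}_{\alpha,\bbeta,\ell},W^{(2)}_{\alpha,\bbeta,\ell}\bigr)$ converges uniformly a.s. to $\max\bigl(W^{(1)}_{\alpha,\bbeta},W^{(2)}_{\alpha,\bbeta}\bigr)$ as $\ell\to\infty$, follows by applying the uniform a.s.\ convergence established in the proof of Theorem \ref{thm; partial maxima pro conv} to each of the two independent copies, using that the maximum of two uniformly convergent sequences converges uniformly, and bounding the number of index sets $S$ with $I_S\cap[\0,\bt]\neq\emptyset$ by $\ell(\bbeta)$ via Proposition \ref{prop;intersection properties of prod shifted rsc}.

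The heart of the matter is the truncated convergence of $\bigl((1/b_\bn)\tilde M_{\bn,\ell}(\bt),\,\bt\in[\0,\one]\bigr)$ to $\bigl((C_\alpha/2)^{1/\alpha}\max(W^{(1)}_{\alpha,\bbeta,\ell},W^{(2)}_{\alpha,\bbeta,\ell})(\bt),\,\bt\in[\0,\one]\bigr)$ in the $J_1$ topology. By Remark \ref{rk:alone} and the computation in the proof of Theorem \ref{thm; r s-m abs conv}, outside an event of vanishing probability the field $(1/b_\bn)\tilde M_{\bn,\ell}(\bt)$ equals
\begin{multline*}
C_\alpha^{1/\alpha}\max\Bigl[\max_{S\subset\{1,\ldots,\ell\}}\one_{\{(\one/\bn)\hat{I}_{S,\bn}\cap[\0,\bt]\neq\emptyset\}}\sum_{j\in S}\one_{\{\epsilon_j=1\}}\Gamma_j^{-1/\alpha},\\
\max_{S\subset\{1,\ldots,\ell\}}\one_{\{(\one/\bn)\hat{I}_{S,\bn}\cap[\0,\bt]\neq\emptyset\}}\sum_{j\in S}\one_{\{\epsilon_j=-1\}}\Gamma_j^{-1/\alpha}\Bigr].
\end{multline*}
I would then pass, via the Skorohod representation theorem and the separability of the Fell topology \cite{salinetti:wets:1981}, to a common probability space on which the convergence $(1/\bn)\hat{I}_{S,\bn}\to I_S$ of Proposition \ref{lem; joint convergence lemma} is almost sure, introduce the left endpoints $t^{(i)}_{S,\bn},t^{(i)}_S$ of the one-dimensional intersection sets and the vectors $\bt_{S,\bn}\to\bt_S$, note that the $2^\ell$ limit points $\bt_S$ are distinct because stable subordinators do not hit fixed points, and observe that the single family $\{\bt_{S,\bn}\}_{S\subset\{1,\ldots,\ell\}}$ determines both sign-components of the displayed field, hence the whole maximum. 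Consequently any homeomorphisms of $[0,1]^d$ onto itself that fix the origin and carry $\bt_{S,\bn}$ to $\bt_S$, chosen to converge to the identity uniformly, realize the $J_1$ convergence. Finally $(\one_{\{\epsilon_j=1\}}\Gamma_j^{-1/\alpha})_j$ and $(\one_{\{\epsilon_j=-1\}}\Gamma_j^{-1/\alpha})_j$ are two independent Poisson random measures, each with the law of $(2^{-1/\alpha}\Gamma_j^{-1/\alpha})_j$, which together with \eqref{e:repr.Wl} identifies the limit as $(C_\alpha/2)^{1/\alpha}\max(W^{(1)}_{\alpha,\bbeta,\ell},W^{(2)}_{\alpha,\bbeta,\ell})$, using Lemma \ref{lem; asy exclusive return time} to discard the vanishing event.

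The step I expect to be the main obstacle is verifying that a single homeomorphism simultaneously rectifies both sign-components of $\tilde M_{\bn,\ell}$. This works precisely because the zero sets $\hat{I}_{S,\bn}$, and hence the endpoints $\bt_{S,\bn}$, do not depend on the signs $\epsilon_j$: the $\{\epsilon_j=1\}$ part and the $\{\epsilon_j=-1\}$ part are both piecewise constant and jump on the same fronts, so their maximum is again piecewise constant with jump set contained in the same finite set $\{\bt_{S,\bn}\}$, and the map $\bt\mapsto(1/b_\bn)\tilde M_{\bn,\ell}(\bt)$ is a $J_1$-continuous image of the tuple $(\bt_{S,\bn})_S$; the argument of Theorem \ref{thm; partial maxima pro conv} then applies without change.
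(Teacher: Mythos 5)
Your proposal is correct and follows exactly the route the paper intends: the paper omits this proof, stating only that it ``can be obtained along the same lines'' as Theorem \ref{thm; r s-m abs conv}, and your argument is precisely that combination of the proof of Theorem \ref{thm; partial maxima pro conv} with the two-sided (sign-split) modification, including the key observation that the jump fronts $\bt_{S,\bn}$ are sign-independent so a single family of homeomorphisms rectifies both components. The remaining ingredients (the tail bound via Proposition \ref{prop; tail sup-mea conv} and the identification of the two thinned Poisson families as independent copies) are handled as in the paper's Theorems \ref{thm; r s-m abs conv} and \ref{thm; partial maxima pro conv}.
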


\begin{remark}{\rm 
The structure of the limit in Theorem \ref{thm; abs partial maxima pro
  conv} together with Remark \ref{rk:ss.field} immediately implies
that the random field in the right hand side of \eqref{eq; abs partial
  maxima pro conv eq} is self-similar, Furthermore, 
as in Remark \ref{rk:not.Fr}, in the case $0<\beta_i\leq 1/2$ for
some $i=1,\ldots, d$, and only in that case, the limiting random field
in Theorem \ref{thm; abs partial maxima pro conv} is Fr\'echet. In
this case an alternative way of stating the theorem is
$$
\left( \frac{1}{b_\bn} \tilde{M}_\bn (\bt),\, \bt  \in \mathbb{R}_+^d
\right )  \Rightarrow  \left(   
 C_\alpha^{1/\alpha}  W_{\alpha,\bbeta} (\bt),\, \bt \in  \mathbb{R}_+^d  \right )\,.
$$
}
\end{remark}

\bibliographystyle{authyear}
\bibliography{bibfile}

\end{document}